\newtheorem{theorem}{Theorem}
\newtheorem{lemma}[theorem]{Lemma}
\newtheorem{corollary}[theorem]{Corollary}
\newtheorem{remark}[theorem]{Remark}
\title{Zykov sums of digraphs with diachromatic number equal to their harmonious chromatic number\footnotemark[4]}
\author{Mika Olsen\footnotemark[1] \and Christian Rubio-Montiel\footnotemark[2] \and Alejandra Silva-Ram{\' i}rez\footnotemark[3]}
\begin{document}
\maketitle

\def\thefootnote{\fnsymbol{footnote}}
\footnotetext[1]{Departamento de Matem{\' a}ticas Aplicadas y Sistemas, UAM-Cuajimalpa, Mexico City, Mexico. {\tt
		olsen@correo.cua.uam.mx}.}
\footnotetext[2]{Divisi{\' o}n de Matem{\' a}ticas e Ingenier{\' i}a, FES Acatl{\' a}n, Universidad Nacional Aut{\'o}noma de M{\' e}xico, Naucalpan, Mexico. {\tt christian.rubio@acatlan.unam.mx}.}
\footnotetext[3]{Posgrado de Ciencias Naturales e Ingenier\'ia, UAM-Cuajimalpa, Mexico City, Mexico.   {\tt 2173800481@cua.uam.mx}.}
\footnotetext[4]{Supported by CONACYT of Mexico under projects A1-S-12891 and 47510664, and PAIDI of Mexico under project 007/21.} 	
\begin{abstract} 
The dichromatic number and the diachromatic number are generalizations of the chromatic number and the achromatic number  for digraphs considering acyclic colorings. In this paper, we determine the diachromatic number of digraphs arising from the Zykov sum of digraphs  that admit a complete $k$-coloring with $k=\tfrac{1+\sqrt{1+4m}}{2}$ for a suitable $m$. Consequently, the diachromatic number equals the harmonious number for every digraph in this family. In particular, we study the chromatic number, the diachromatic number, and the harmonious chromatic number of the Zykov sum of cycles. 
\end{abstract}
	
\textbf{Keywords.} Dichromatic number, factorization, detachments, composition of digraphs, lexicographic product.

%05C38, 05C76

%%%%%%%%%%%%%%%%%%%%%%%%%%%%%%%%%%%%%%%%%%%%%%%%%%%%

\section{Introduction}

A coloring of a digraph can be understood as a function that maps elements of a graph, usually vertices, into some set, usually numbers, which are called colors and such that some property is satisfied, usually related to the arcs.  More precisely, a \emph{$k$-coloring} of a digraph $D$ is a proper vertex-coloring, that is, each chromatic class induces a subdigraph with no arcs.  The \emph{chromatic number} $\chi(D)$ of $D$ is the smallest $k$ for which there exists a $k$-coloring of $D$ \cite{MR2472389}.  The original concept of the chromatic number comes from graphs and then extended to digraphs in this natural way, as well as the following colorings and parameters.

A coloring of $D$ is called \emph{harmonious} if for every ordered pair $(i,j)$ of different colors there is at most one arc $uv$ such that $u$ is colored $i$ and $v$ is colored $j$. The \emph{harmonious chromatic number} $h(D)$ of $D$ is the smallest $k$ for which there exists a harmonious $k$-coloring of $D$ \cite{MR2895432,MR3329642}. 
A coloring of $D$ is \emph{complete} if for every ordered pair $(i,j)$ of different colors there is at least one arc $uv$ such that $u$ is colored $i$ and $v$ is colored $j$ \cite{MR2998438}.  The \emph{achromatic number} $\psi(D)$ of  $D$ is the largest $k$ for which there is a complete $k$-coloring of $D$ \cite{MR2998438}. Therefore, the size $m$ of a digraph $D$ is bounded below by $2\tbinom{\text{dac}(D)}{2}$, hence, $\psi(D)$ is bounded above by $\tfrac{1+\sqrt{1+4m}}{2}$ and both coincide if and only if there are exactly two arcs between every two chromatic classes.  For graphs (which can be seen as symmetric digraphs), such parameters are called the \emph{chromatic number} $\chi$, the \emph{harmonious chromatic number} $h$ and the \emph{achromatic number} $\psi$, respectively.
Edwards \cite{MR2998438} proved that determining the exact value of the harmonious chromatic number is NP-hard for digraphs of bounded degree and that for a given digraph the existence of a complete coloring is NP-complete. For this reason, we use the following generalization for the chromatic and the achromatic numbers of a digraph.

A vertex-coloring of a digraph $D$ is \emph{acyclic} if each chromatic class induces a subdigraph with no directed cycles. The \emph{dichromatic number} $\text{dc}(D)$ of $D$ is the smallest $k$ for which there exists an acyclic coloring of $D$ using $k$ colors \cite{MR693366}. Any $\text{dc}(D)$-coloring of $D$ is also complete. The \emph{diachromatic number} $\text{dac}(D)$ of  $D$ is the largest $k$ for which there is an acyclic and complete coloring of $D$ using $k$ colors \cite{MR3875016}. 
Therefore, for any digraph $D$ of size $m$,  we have that
\begin{equation} \label{eq1}
\text{dc}(D)\leq \text{dac}(D)\leq \tfrac{1+\sqrt{1+4m}}{2} \leq h(D).
\end{equation}

On the other hand, the study of parameters arising of complete colorings into graph products can be found in \cite{MR2008428,MR1196112,MR1189852,MR3265992,vijayalakshmi2017achromatic} with results for the cartesian product or join of graphs.
Let $D$ be a digraph and $X = \{H_u\colon u\in V(D)\}$ a family of nonempty mutually vertex-disjoint digraphs. The \emph{Zykov sum} $D[X]$ of $X$ over $D$ is a digraph with vertex set $\underset{u\in V(D)}{\bigcup}V(H_{u})$ and arc set \[\underset{u\in V(D)}{\bigcup}A(H_{u})\cup\left\{ ab:a\in V(H_{u}),b\in V(H_{v}),uv\in A(D)\right\} .\]

The corresponding operation for graphs is called \emph{generalized composition}. If $H_u\cong  H$ for every $u\in V(D)$, then $D[X]$ is called \emph{lexicographic product}  (also called \emph{digraph composition})  and is denoted by $D[H]$.  %Note that the Zykov sum of empty graphs is empty.
It is known that the complexity of testing whether an arbitrary graph can be written nontrivially as the composition of two smaller graphs is the same as the complexity of testing whether two graphs are isomorphic \cite{MR837609} which can be solved in quasipolynomial time according to \cite{MR3536606}.
The dichromatic number of Zykov sums and composition of digraphs were studied in \cite{MR3711038,MR1817491} and the chromatic number of the lexicographic product of graphs were studied in \cite{MR1303390,MR392645}. 

In this paper, we determine digraphs, arising from the Zykov sum of digraphs, which accept a complete $k$-coloring with $k$ equals their harmonious number.  As a consequence,  we obtain results about graphs, arising from the lexicographic product of graphs,  with achromatic number equals their harmonious number.  Then, we analyze conditions to apply the results with particular attention to factorizations of the complete graphs into Hamiltonian cycles.  Also, we study a recursive application of the results.
	
%%%%%%%%%%%%%%%%%%%%%%%%%%%%%%%%%%%%%%%%%%%%%%%%%%%%

\section{Main result}\label{section2}

In this paper, we consider only finite simple digraphs.  Let $[n]$ denote the set $\{1,2,\dots,n\}$ and let  $m\geq 2$. For the case of digraphs, $K_m$  denotes  the  complete symmetric digraph and for graphs, $K_m$  denotes  the  complete  graph. A \emph{factor} $H_j$ of the complete digraph (resp. graph) $K_m$ is a spanning subdigraph (resp. subgraph). A \emph{factorization} $Y$ of $K_m$ is a set of $q$ pairwise arc-disjoint (resp. edge-disjoint) factors $H_j$,  for $j\in [q]$, such that these factors induce a partition of the arcs (resp. edge) of $K_m$.  If $H_j \cong H$ (for all $j\in [q]$) then it is called an \emph{$H$-factorization}.  Given a factorization $Y,$ a \emph{relabel factorization} $X$ of $Y$ is a relabeling of the vertices of $Y$ in the following way: the vertices $\{v^1,v^2,\dots,v^m\}$ of the factor $H_j$ is relabeled into $\{v^1_j,v^2_j,\dots,v^m_j\}$. % to make pairwise-disjoint vertices.

Let $D$ be a $k$-diachromatic digraph  (resp. $k$-achromatic graph)  with a $k$-coloring $\varphi$. Let $\{V_1,V_2,\dots,V_k\}$ be the set of chromatic classes for $\varphi$ with $|V_i|=q_i$. For each $i\in [k]$, denote the vertices of the chromatic class $V_i$ by $\{u_{i,1},u_{i,2},\dots u_{i,q_i}\}$. In this case $V(D)=\bigcup_{i=1}^k V_i$. 
For each $i\in [k]$, let $X_i=\{H_{u_{i,1}},H_{u_{i,2}},\dots,H_{u_{i,{q_i}}}\}$ be a relabel factorization of $K_{m_i}$ into $q_i$ factors%, that is,  $\bigcup_{j=1}^{q_i}H_{u_{i,j}}=K_{m_i} $
. We consider the Zykov sum $D[X]$, where $X=\underset{i=1}{\overset{k}{\bigcup}}X_{i}=\{H_{u_{i,j}}\colon u_{i,j}\in V(D)\}$. 

Before proving our first theorem, we require the following result and definitions.  For two nonempty vertex sets $V_1,V_2$  of a digraph $D$, we define $[V_1,V_2] = \{ (x,y)\in A(D) \mid  x \in V_1, y \in V_2 \}.$ A digraph $D$ is \emph{$k$-minimal} if $\text{dac}(D)=k$ and $\text{dac}(D-f)<k$ for all $f\in A(D)$.

\begin{theorem} \cite{MR3875016} \label{teo1}
Let $D$ be a digraph with diachromatic number $k$. Then, $D$ is $k$-minimal if and only if $D$ has size $k(k-1)$.
\end{theorem}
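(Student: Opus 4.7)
The plan is to exploit the size bound $m \geq k(k-1)$ already implicit in equation~(1): a complete $k$-coloring requires at least one arc for each of the $k(k-1)$ ordered pairs of distinct colors, so every digraph with $\text{dac}(D)=k$ has size at least $k(k-1)$. Both directions of the iff will hinge on comparing the actual size $m$ of $D$ to this extremal value.

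For the easy direction, suppose $m=k(k-1)$. Let $f\in A(D)$ be arbitrary; I want to show $\text{dac}(D-f)<k$. If $D-f$ admitted an acyclic complete $k$-coloring, then by the same counting argument applied to $D-f$, its size would have to be at least $k(k-1)$. But $|A(D-f)|=m-1=k(k-1)-1$, contradiction. Hence no acyclic complete $k$-coloring of $D-f$ exists, and since $\text{dac}$ is monotone in arcs we get $\text{dac}(D-f)<k$, so $D$ is $k$-minimal.

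For the reverse direction, I would argue the contrapositive: assume $m>k(k-1)$ and produce an arc whose removal preserves the diachromatic number. Fix any acyclic complete $k$-coloring $\varphi$ of $D$ with chromatic classes $V_1,\dots,V_k$. By pigeonhole on the $k(k-1)$ ordered color pairs, there exist distinct colors $i,j$ with $|[V_i,V_j]|\geq 2$. Pick any arc $f\in [V_i,V_j]$. Then $\varphi$ restricted to $D-f$ remains a proper vertex coloring, each class still induces an acyclic subdigraph (removing arcs cannot create cycles), and completeness is preserved because the pair $(i,j)$ still has at least one arc and every other pair was untouched. Therefore $\text{dac}(D-f)\geq k$, so $D$ is not $k$-minimal.

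The argument is essentially a double-counting plus pigeonhole exercise, and the only real subtlety is to record carefully that removing a single arc preserves acyclicity and to avoid the temptation to recolor $D-f$: the chosen $k$-coloring $\varphi$ of $D$ is inherited verbatim. I do not expect a genuine obstacle; the proof is quite short once one notices that equation~(1) is tight precisely when exactly one arc realizes each ordered color pair.
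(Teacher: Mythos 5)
The paper cites this theorem from \cite{MR3875016} without reproducing a proof, so your argument can only be judged on its own terms. Your forward direction is sound: if $\text{dac}(D-f)=k'\geq k$, then $D-f$ would need at least $k'(k'-1)\geq k(k-1)$ arcs, one more than it has. Note that this size count already forces $\text{dac}(D-f)<k$ directly, so you do not need (and should not invoke) ``monotonicity of $\text{dac}$ under arc deletion'', which is not obvious in this setting: reinserting an arc preserves completeness but can create a directed cycle inside a chromatic class, so a complete acyclic coloring of $D-f$ need not survive in $D$.

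The reverse direction has a genuine gap. A diachromatic coloring is acyclic, not proper: a chromatic class may contain arcs as long as it induces no directed cycle, and your phrase ``remains a proper vertex coloring'' signals the slip. Consequently, when $m>k(k-1)$ your pigeonhole over the $k(k-1)$ ordered colour pairs does not account for all $m$ arcs --- the surplus arcs could all lie inside chromatic classes, in which case every set $[V_i,V_j]$ with $i\neq j$ might still contain exactly one arc and the arc $f$ you want would not exist. The repair is short: if some arc $f$ has both endpoints in the same class, delete that one instead; arc deletion preserves acyclicity, and completeness is untouched because no inter-class pair loses an arc, so $\text{dac}(D-f)\geq k$ and $D$ is not $k$-minimal. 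Only in the case where every arc joins two distinct classes does your pigeonhole apply, and there it works exactly as written. With that case distinction added, the proof goes through.
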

	
\begin{theorem}\label{teo2}
Let $D$ be a $k$-minimal digraph of order $n$ with a $k$-coloring $\varphi$. Let $\{V_1,V_2,\dots,V_k\}$ be the set of chromatic classes for $\varphi$ with $|V_i|=q_i$. For each $i\in [k]$ let $V_i=\{u_{i,1},u_{i,2},\dots u_{i,q_i}\}$ and let $X_i=\{H_{u_{i,1}},H_{u_{i,2}},\dots,H_{u_{i,{q_i}}}\}$ be relabel factorizations of $K_{m_i}$ into $q_i$ factors. Then $D[X]$ is $t$-minimal, where $X=\underset{i=1}{\overset{k}{\bigcup}}X_{i}$ and $t=\overset{k}{\underset{i=1}{\sum}}m_{i}$.
\end{theorem}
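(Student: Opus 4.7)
The plan is to apply Theorem~\ref{teo1} to $D[X]$: it suffices to verify that $\text{dac}(D[X]) = t$ and $|A(D[X])| = t(t-1)$, whereupon $t$-minimality follows at once.

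First, I would use Theorem~\ref{teo1} on $D$ to rigidify it. Since $D$ is $k$-minimal, $|A(D)|=k(k-1)$. Because any diachromatic $k$-coloring is complete, each of the $k(k-1)$ ordered pairs of distinct chromatic classes must be spanned by at least one arc; the count is then saturated, forcing $D$ to have no arc inside any $V_i$ and exactly one arc for each ordered pair $(V_i,V_{i'})$, $i\neq i'$. This rigidification of $D$ powers the rest of the argument.

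Next I would count the arcs of $D[X]$. The inside-factor arcs come from the factorizations: for each $i$, the $q_i$ factors of $X_i$ partition the $m_i(m_i-1)$ arcs of $K_{m_i}$, so these contribute $\sum_i m_i(m_i-1)$ in total. The between-factor arcs are indexed by arcs of $D$: each arc $u_{i,j}u_{i',j'}\in A(D)$ produces $m_i m_{i'}$ arcs in $D[X]$, and by the rigidification the between-factor total equals $\sum_{i\neq i'} m_i m_{i'} = t^2 - \sum_i m_i^2$. Adding,
\[
|A(D[X])| = \sum_i m_i(m_i-1) + t^2 - \sum_i m_i^2 = t^2-t = t(t-1),
\]
so by~\eqref{eq1}, $\text{dac}(D[X]) \le t$.

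For the matching lower bound I would exhibit an explicit complete acyclic $t$-coloring: assign the color $(i,\ell)$ to the vertex of $H_{u_{i,j}}$ to which the relabeling sends the original $v^{\ell}\in V(K_{m_i})$; this uses $t=\sum_i m_i$ colors. A color class $(i,\ell)$ consists of exactly one vertex in each $H_{u_{i,j}}$ with $j\in[q_i]$, and no two of them are joined by an arc of $D[X]$: such an arc would have to be a between-factor arc corresponding to some arc $u_{i,j}u_{i,j'}\in A(D)$ with $j\neq j'$, but $D$ has no arcs inside $V_i$. So every color class is independent, hence acyclic. For completeness, if $i_1=i_2=i$ then the arc $v^{\ell_1}v^{\ell_2}$ of $K_{m_i}$ lies in exactly one factor $H_{u_{i,j}}$ by the factorization property, which provides the witness; if $i_1\neq i_2$ then the unique $D$-arc $u_{i_1,j}u_{i_2,j'}\in A(D)$ contributes the complete bipartite pattern of arcs from $H_{u_{i_1,j}}$ to $H_{u_{i_2,j'}}$ in $D[X]$, which includes the required one. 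Thus $\text{dac}(D[X])\ge t$, equality holds, and Theorem~\ref{teo1} delivers $t$-minimality. The main pitfall is notational rather than conceptual: three nested indices (the class $i$, the factor index $j$ inside $V_i$, and the within-factor label $\ell$) must be kept straight so that the relabeling places same-color vertices in distinct copies within the same $V_i$, blocking the only mechanisms that could create monochromatic arcs.
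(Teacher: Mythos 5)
Your proposal is correct and follows essentially the same route as the paper: the identical explicit $t$-coloring (color $(i,\ell)$ on the copy of $v^\ell$ in each factor of $X_i$), the same verification that each class is independent and that the $k$-minimality of $D$ plus the factorization property yield exactly one arc between every ordered pair of color classes, and the same appeal to Theorem~\ref{teo1}. Your separate arc count $\sum_i m_i(m_i-1)+\sum_{i\neq i'}m_im_{i'}=t(t-1)$ is just a slightly more explicit way of packaging what the paper gets by observing the coloring is ``minimal and complete.''
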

\begin{proof}
We take a partition of $K_{m_i}$ into $q_i$ factors. In order to have a set of colored and sorted vertices arising from $V(K_{m_i})=\{v^1_{u_i},v^2_{u_i},\dots,v^{m_i}_{u_i}\}$, we define the following coloring. Let $f_i\colon V(K_{m_i})\rightarrow [m_i]$ be the complete $m_i$-colorings of $K_{m_i} $ such that $f_i(v_{u_i}^l)=l$ for each $l\in[m_i]$. Let $f_{i,j}\colon V(H_{u_{i,j}})\rightarrow [m_i]$ the natural restriction of $f_i$ into each factor $H_{u_{i,j}}$, that is, $f_{i,j}(v^l_{u_{i,j}})=f_i(v^l_{u_i})=l$ for any vertex $v^l_{u_{i,j}}\in V(H_{u_{i,j}})$, with $i\in [k]$, $j\in [q_i]$ and $l\in [m_i]$, see Figure \ref{Fig1} for an example.
		
Let $\varsigma\colon V(D[X])\rightarrow [t]$ be a $t$-coloring such that for each  $l\in[m_i]$ \[\varsigma(v_{u_{i,j}}^l)=c(i,l):=\overset{i-1}{\underset{a=0}{\sum}}m_{a}+l, \mbox{with }  m_0=0.\]
That is, if $i$ and $l$ are fixed,   for each $j\in [q_i$] the  vertex $v_{u_{i,j}}^l$ in  the factor $H_{{u_{i,j}}}$ has   color $c(i,l)$. Thus, the set of vertices colored $c(i,l)$ of $\varsigma$ is \[\{v^l_{u_{i,1}},v^l_{u_{i,2}},\dots,v^l_{u_{i,q_i}}\}.\]
Since the Zykov sums of empty graphs is empty, the coloring is proper and then acyclic due to the fact that the induced subgraph by $\{v^l_{u_{i,1}},v^l_{u_{i,2}},\dots,v^l_{u_{i,q_i}}\}$ of $D[X]$ is empty.

Next, we claim the $\varsigma$ coloring is minimal and complete. Let $c(i,l)$ and $c(i',l')$ be two colors of $\varsigma$ with $i, i'\in [q_i]$, $l\in[m_i]$ and $l'\in [m_{i'}]$. If $i=i'$, since each $H_{u_{i,j}}$ has the $m_i$ colors of $f_i$, then $v^l_{u_{i,j}}v^{l'}_{u_{i,j}}$ is the unique arc of $H_{u_{i,j}}$ for some $j$ and then there exists a unique arc between $c(i,l)$ and $c(i,l')$. On the other hand, since $\varphi$ is minimal and complete, if $i\not =i'$ there exists a unique arc $u_{i,j}u_{i',j'}$ such that $\varphi (u_{i,j})=i$ and $\varphi (u_{i',j'})=i'$ with $j\in [q_i]$ and $j'\in [q_i']$. Therefore, $\left[V(H_{{u_i},j}),V(H_{{u_i'},j'})\right]$ is a bipartition of a directed complete bipartite subdigraph of $D[X]$. In consequence,  for  a fixed $l$ and $l'$ the arc $v^l_{u_{i,j}}v^{l'}_{u_{i',j'}}$ is  the unique arc from a vertex of color $c(i,l)$ to a vertex with color $c(i',l')$.
\end{proof}

Figure \ref{Fig1} shows the example of the Zykov sum $\protect\overrightarrow{C}_{6}[X]$ for some set of digraphs $X$, where $X=\{X_1, X_2, X_3\}$  and  $X_1=\{H_{u_{1,1}},H_{u_{1,2}}\}$, $X_2=\{H_{u_{2,1}},H_{u_{2,2}}\}$ and $X_3=\{H_{u_{3,1}},H_{u_{3,2}}\}$ are relabel factorizations of $K_2$, $K_3$ and $K_4$, respectively.  In order to avoid drawing all the arc between subdigraphs we use the symbol $\Rightarrow$. An edge represents a couple of symmetric arcs.

\begin{figure}[htbp!]	
\begin{center}	
\includegraphics{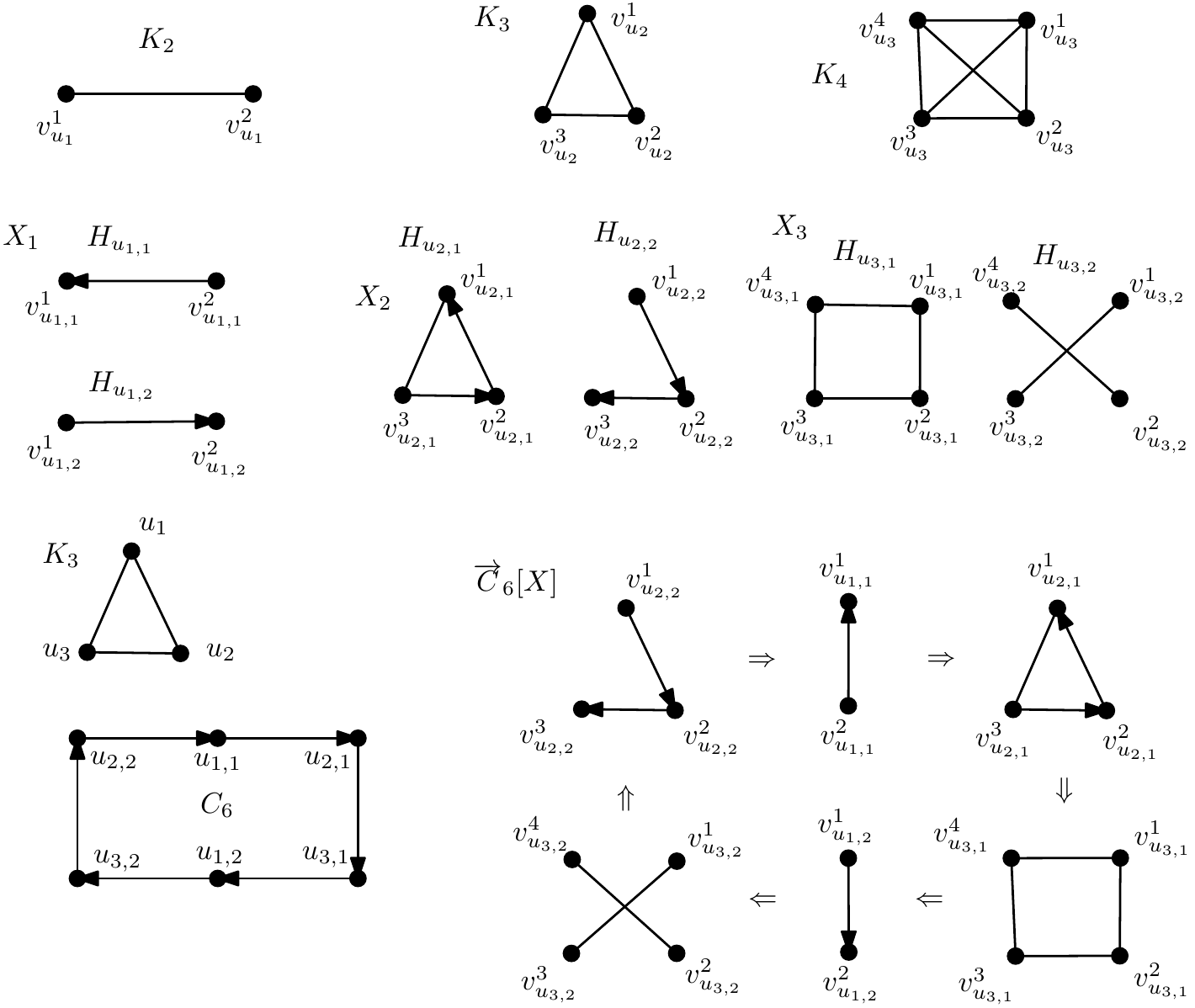}
\caption{\label{Fig1} The Zykov sum $\protect\overrightarrow{C}_{6}[X]$ where $X$ is a relabel factorization of $K_2$, $K_3$ and $K_4$. }
\end{center}
\end{figure}

Now, we have the following corollaries due to Theorem \ref{teo2}.  For this, we recall 
%some definitions. An \emph{equitable coloring} is a coloring in such a way that the numbers of vertices in any two chromatic classes differ by at most one \cite{MR337678}. 
that a \emph{balanced coloring} is a coloring in such a way that any two chromatic classes have the same cardinality. 

\begin{corollary}\label{cor3} 
Let $D$ be a $k$-minimal digraph of order $n$ with a balanced  $k$-coloring such that $qk=n$. % for some  $q \in \mathbb{N}$. 
Let $X_i$ be relabel factorizations of $K_m$ into $q$ factors, that is, $X_i=\{H_{u_{i,1}},H_{u_{i,2}},\dots,H_{u_{i,q}}\}$ for $i\in [k]$. Then $D[X]$ is $km$-minimal with a balanced  $km$-coloring where $X=\underset{i=1}{\overset{k}{\bigcup}}X_{i}$.
\end{corollary}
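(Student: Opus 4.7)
The plan is to observe that Corollary \ref{cor3} is a direct specialization of Theorem \ref{teo2} to the uniform (balanced) case, and the only real task is to track the parameters and verify the balance condition on the induced coloring.

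First, I would apply Theorem \ref{teo2} verbatim with the substitutions $m_i=m$ and $q_i=q$ for every $i\in[k]$, which are available because the hypothesis gives a balanced $k$-coloring of $D$ (so every chromatic class satisfies $|V_i|=q$) and every relabel factorization $X_i$ is of $K_m$ into $q$ factors. Theorem \ref{teo2} then asserts that $D[X]$ is $t$-minimal, where
\[
t=\sum_{i=1}^{k}m_{i}=\sum_{i=1}^{k}m=km,
\]
which gives the desired $km$-minimality.

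Next, I would verify that the $t$-coloring $\varsigma$ exhibited in the proof of Theorem \ref{teo2} is balanced under these hypotheses. Recall from that proof that for each pair $(i,l)$ with $i\in[k]$ and $l\in[m_i]$, the chromatic class colored $c(i,l)$ equals
\[
\{v^{l}_{u_{i,1}},v^{l}_{u_{i,2}},\dots,v^{l}_{u_{i,q_i}}\}.
\]
With $q_i=q$ for all $i$, every such class has exactly $q$ vertices, so all $km$ chromatic classes are of the same size, and $\varsigma$ is indeed a balanced $km$-coloring. A quick sanity check on cardinalities confirms this: $|V(D[X])|=nm=(qk)m=q(km)$.

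There is no genuine obstacle; the argument is purely bookkeeping on top of Theorem \ref{teo2}. The only point worth stating carefully is why the balance property passes from the coloring $\varphi$ of $D$ to the coloring $\varsigma$ of $D[X]$, and this is exactly the observation above that the size of each class of $\varsigma$ depends only on $q_i$, which is constant by hypothesis.
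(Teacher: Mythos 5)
Your proposal is correct and matches the paper exactly: the paper offers no separate proof of Corollary \ref{cor3}, presenting it as an immediate consequence of Theorem \ref{teo2} obtained by setting $m_i=m$ and $q_i=q$ for all $i$, which is precisely your specialization. Your added verification that the resulting coloring is balanced (each class $\{v^{l}_{u_{i,1}},\dots,v^{l}_{u_{i,q}}\}$ has exactly $q$ vertices) is the one detail the paper leaves implicit, and you handle it correctly.
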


\begin{corollary}\label{cor4}
Let $D$ be a $k$-minimal digraph of order $n$ with a balanced $k$-coloring such that $qk=n$.
If $K_m$ has a relabel $H$-factorization into $q$ factors, then $D[H]$ is $km$-minimal with a balanced $km$-coloring.
\end{corollary}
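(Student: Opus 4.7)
The plan is to deduce Corollary~\ref{cor4} directly from Corollary~\ref{cor3} by exploiting the fact that when all factors of a factorization of $K_m$ are isomorphic to a single digraph $H$, the Zykov sum collapses to the lexicographic product. First, I would fix a relabel $H$-factorization of $K_m$ into $q$ factors, and for every chromatic class index $i\in[k]$ set $X_i=\{H_{u_{i,1}},\ldots,H_{u_{i,q}}\}$ to be a copy of this same relabel $H$-factorization attached to the vertices $u_{i,1},\ldots,u_{i,q}$ of class $V_i$. By construction each $X_i$ is a relabel factorization of $K_m$ into $q$ factors, so the hypotheses of Corollary~\ref{cor3} are met: $D$ is $k$-minimal, its $k$-coloring is balanced with $qk=n$, and each $X_i$ has the required form.

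Next, I would invoke the definition recalled in the introduction: when every digraph in the family $X=\bigcup_{i=1}^k X_i$ is isomorphic to $H$, the Zykov sum $D[X]$ is by definition the lexicographic product $D[H]$. Hence Corollary~\ref{cor3} applied to $X$ yields that $D[X]=D[H]$ is $km$-minimal and admits a balanced $km$-coloring. The main (and only) thing to check is this identification $D[X]=D[H]$, which is immediate from unwrapping the definition of generalized composition; no estimation or combinatorial argument is required beyond what was already done in Theorem~\ref{teo2} and Corollary~\ref{cor3}. In particular, no obstacle is expected here, since Corollary~\ref{cor4} is a clean specialization of Corollary~\ref{cor3}.
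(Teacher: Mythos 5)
Your proposal is correct and matches the paper's intent: Corollary~\ref{cor4} is stated without proof precisely because it is the specialization of Corollary~\ref{cor3} to the case where the factorization of $K_m$ is an $H$-factorization, so that every $H_{u_{i,j}}\cong H$ and the Zykov sum $D[X]$ is, by the paper's own definition, the lexicographic product $D[H]$. Nothing further is needed.
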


\section{The lexicographic product of cycles}\label{section3}

Now, we proceed to construct families of digraphs obtained  by  Zykov sums  $D$ and $H$ that satisfy the hypothesis of Theorem \ref{teo2}, and then we give results for $\overrightarrow{C}_{m}[\overrightarrow{C}_{n}]$ for some values of $m$. We recall some definitions given in \cite{MR3875016}. 

Let $u$ and $v$ be two vertices of a digraph $D$ such that  $uv$ is an arc of $D$.  We say that $u$ is \emph{incident to} $v$ and $v$ is \emph{incident from} $u$.  The out-neighborhood $N ^+(u)$ of a vertex $u$ is the set of vertices that are incident from $u$. Similarly, the in-neighborhood $N^-(v)$ of a vertex $v$ is the set of vertices incident to $v$. 

Two vertices are \emph{adjacent} if they are in a $2$-cycle. To obtain an \emph{elementary dihomomorphism} of a digraph $D$, identify two nonadjacent vertices $u$ and $v$ of $D$. The resulting vertex, when identifying $u$ and $v$, may be denoted by either $u$ or $v$.  A sequence of elementary dihomomorphisms is a \emph{dihomomorphism}. For graphs, a dihomomorphism image corresponds to a usual homomorphism image also called \emph{amalgamation}, see \cite{MR916377}. A graph $G$ is an amalgamation of a graph $H$ if and only if $H$ is a \emph{detachment} of $G$, see \cite{MR2138114}.

An elementary dihomomorphism preserving the cardinality of arcs is called \emph{elementary identification} $\epsilon$, that is, let $D$ be a digraph and $u,v\in V(D)$ two independent vertices such that $N^+(u)\cap N^+(v)=\emptyset$ and $N^-(u)\cap N^-(v)=\emptyset$, then $\epsilon$ is the elementary dihomomorphism obtained by identifying $u$ and $v$. %Not only is $\epsilon (D)$ an identification image of $D$, 
A digraph $D'$ is an  identification image of a digraph $D$ if and only if $D'$ can be obtained by a sequence of elementary identifications beginning with $D$.
	
An \emph{elementary unfold} is the inverse of an elementary identification and an \emph{unfold} is the inverse of an identification.  For graphs,  an \emph{exact graph} has exactly $\binom{k}{2}$ edges for some integer $k$, see \cite{MR2138114}. Then, we say a digraph is \emph{exact} if it has exactly $k(k-1)$ arcs for some integer $k$, and we call an identification image as an \emph{exact amalgamation} and an unfold image as an \emph{exact detachment}. For example, an exact detachment of  $K_5$ is $\overrightarrow{C}_{20}$ if we follow a Eulerian circuit of $K_5$, and vice versa, an exact amalgamation of $\overrightarrow{C}_{20}$ is  $K_5$.  Also, it is clear that a factorization of a digraph can be understood as an exact detachment. 

\begin{remark}\label{identificacion}
A digraph $D$ is $k$-minimal if and only if there exists an  identification $\Gamma$  from the digraph $D$ to the complete digraph $K_k$.  
\end{remark}

As a consequence of  Remark \ref{identificacion},  we have that $ K_k $ can be unfolded in the cycle $\overrightarrow{C}_{k(k-1)}
$. The induced $k$-coloring of $\overrightarrow{C}_{k(k-1)}$ is balanced where each chromatic class has $k-1$ vertices. On the other hand, $K_k$ accepts a $\overrightarrow{C}_{k}$-factorization into $k-1$ factors for $k\not=4,6$, see \cite{MR505807,MR584161}. 

\begin{corollary}\label{cor6}
Let $n$ be a natural number such that $n\not=4,6$. The digraph $D=\overrightarrow{C}_{n^2-n}[\overrightarrow{C}_n]$ is $n^2$-minimal with a balanced $n^2$-coloring. Hence
\[\text{\emph{dac}}(D)=h(D)=n^2.\]
\end{corollary}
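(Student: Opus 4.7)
The plan is to recognize that this corollary is a direct specialization of Corollary \ref{cor4} to the base digraph $D_0=\overrightarrow{C}_{n^2-n}$ and to the factor digraph $H=\overrightarrow{C}_n$, so the main task is just to verify the two hypotheses and then convert the conclusion about $n^2$-minimality into the stated equalities for $\text{dac}$ and $h$.

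First I would check that $D_0=\overrightarrow{C}_{n^2-n}$ is $n$-minimal with a balanced $n$-coloring whose chromatic classes each have size $q=n-1$. By Remark \ref{identificacion}, a digraph is $k$-minimal exactly when it identifies onto $K_k$; taking $k=n$, the remark (together with the statement right after it) says precisely that $\overrightarrow{C}_{n(n-1)}$ unfolds from $K_n$ via a Eulerian circuit, yielding a balanced $n$-coloring with $n-1$ vertices per class. So hypothesis (1) of Corollary \ref{cor4} holds with $q=n-1$ and $qk=(n-1)n=n^2-n=|V(D_0)|$. Second, I would invoke the classical result cited in the excerpt that $K_n$ admits a $\overrightarrow{C}_n$-factorization into $n-1$ factors whenever $n\neq 4,6$; after relabeling the vertices of each $\overrightarrow{C}_n$-factor in the manner prescribed in Section \ref{section2}, this produces the required relabel $\overrightarrow{C}_n$-factorization of $K_n$ into $q=n-1$ factors.

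With both hypotheses verified, Corollary \ref{cor4} applies with $k=n$ and $m=n$, giving that $D=\overrightarrow{C}_{n^2-n}[\overrightarrow{C}_n]$ is $km=n^2$-minimal with a balanced $n^2$-coloring. Hence $\text{dac}(D)=n^2$. By Theorem \ref{teo1}, the size of $D$ is exactly $n^2(n^2-1)$, so the upper bound in \eqref{eq1} becomes $\tfrac{1+\sqrt{1+4n^2(n^2-1)}}{2}=\tfrac{1+(2n^2-1)}{2}=n^2$, forcing $h(D)\geq n^2$. For the reverse inequality, note that any minimal acyclic complete $n^2$-coloring achieves equality throughout \eqref{eq1}: completeness guarantees at least one arc between each ordered pair of color classes, and the size constraint $n^2(n^2-1)$ from Theorem \ref{teo1} forces exactly one such arc, which is the harmonious condition. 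Therefore the balanced $n^2$-coloring of $D$ produced above is harmonious, giving $h(D)\leq n^2$ and hence $h(D)=n^2$.

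I do not anticipate serious obstacles; the only subtlety is bookkeeping the parameters $k=n$, $q=n-1$, $m=n$ so that the two ingredients (the Eulerian unfolding of $K_n$ into $\overrightarrow{C}_{n(n-1)}$ and the cycle-factorization of $K_n$) feed Corollary \ref{cor4} correctly, together with the observation that a minimal diachromatic coloring of a digraph of size $k(k-1)$ is automatically harmonious, which is what converts the $\text{dac}$ result into the $h$ result.
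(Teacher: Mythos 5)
Your proposal is correct and follows essentially the same route as the paper: unfold $K_n$ into $\overrightarrow{C}_{n^2-n}$ with a balanced $n$-coloring, take the $\overrightarrow{C}_n$-factorization of $K_n$ into $n-1$ factors for $n\neq 4,6$, and feed both into Corollary \ref{cor4} to get $n^2$-minimality. Your extra bookkeeping of why $\text{dac}(D)=h(D)=n^2$ (the size $n^2(n^2-1)$ forcing equality in Equation \ref{eq1} and exactly one arc per ordered pair of classes) is just an explicit spelling-out of the "the result follows" step that the paper leaves implicit.
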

\begin{proof}
Since, $K_n$ can be unfold into $n-1$ digraphs isomorphic to $\overrightarrow{C}_{n}$, and $K_{n}$ can be unfold into $\overrightarrow{C}_{n^2-n}$ with a balance $n$-coloring where each chromatic class has $n-1$ vertices. Therefore,  $\overrightarrow{C}_{n^2-n}[\overrightarrow{C}_{n}]$ is $n^2$-minimal and the result follows.
\end{proof}

\subsection{On the diachromatic number}

In this subsection, we bound the diachromatic number of $D=\overrightarrow{C}_{m}[\overrightarrow{C}_{n}]$ for $m$ close to $n^2-n$.

To begin with, we improve the bound in Equation \ref{eq1} for $m=n^2-n+t$ following the idea of comparing two functions one of which determines the maximum possible number of chromatic classes of order $x$ and the other one determines how many chromatic classes can be incident to one chromatic class of order $x$, where $x$ is the order of the smallest chromatic class.

\begin{theorem}\label{teo7}
Let $m,n$ be natural numbers such that $m,n\geq 3$. For any complete coloring of $D=\overrightarrow{C}_{m}[\overrightarrow{C}_{n}]$ using $k$ colors
\[k\leq \max\left\{ \min\{f_n(x),g_n(x)\} \textrm{ with } x\in\mathbb{N}\right\}\] where $f_n(x)=\left\lfloor mn/x\right\rfloor$ and $g_n(x)=x(n+1)+1$.
\end{theorem}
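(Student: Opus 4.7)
The plan is to fix an arbitrary complete $k$-coloring of $D=\overrightarrow{C}_{m}[\overrightarrow{C}_{n}]$, let $W$ be a smallest chromatic class, and set $x=|W|$. I will prove the two inequalities $k\leq f_n(x)$ and $k\leq g_n(x)$ for this particular $x$; combining them gives $k\leq \min\{f_n(x),g_n(x)\}$, which is at most $\max_{y\in\mathbb{N}}\min\{f_n(y),g_n(y)\}$, and the statement of the theorem follows immediately from this monotone weakening.

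The first inequality is a pure counting argument: since $D$ has $mn$ vertices and each of the $k$ chromatic classes contains at least $x$ vertices, we get $kx\leq mn$, hence $k\leq\lfloor mn/x\rfloor=f_n(x)$.

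The second inequality rests on a structural observation about $D=\overrightarrow{C}_{m}[\overrightarrow{C}_{n}]$: every vertex has out-degree exactly $n+1$. Indeed, a vertex in the $i$-th copy of $\overrightarrow{C}_n$ has one out-neighbor inside its copy (the successor of the inner cycle), and by the definition of the Zykov sum it has $n$ further out-neighbors, namely the entire $(i+1)$-st copy of $\overrightarrow{C}_n$, since $i(i+1)\in A(\overrightarrow{C}_m)$. Therefore the union $N^+(W)=\bigcup_{v\in W}N^+(v)$ has cardinality at most $x(n+1)$. Completeness of the coloring requires $W$ to send at least one arc to every other chromatic class, so the vertices of $N^+(W)$ must realize at least $k-1$ distinct colors. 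This gives $k-1\leq x(n+1)$ and hence $k\leq g_n(x)$.

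I do not foresee a serious obstacle: the whole argument is a degree count paired with pigeonhole, and the passage from the inequality at the specific value of $x$ (the minimum class size) to the maximum over all $y\in\mathbb{N}$ is automatic, because $x$ lies in the index set. The only subtle point is the out-degree computation for the lexicographic product, which must be read off the definition of the Zykov sum; once that is in hand, both bounds are one-line consequences.
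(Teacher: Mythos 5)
Your proof is correct and follows essentially the same route as the paper's: both bound $k$ by $f_n(x)$ via the partition count with $x$ the smallest class size, and by $g_n(x)$ via the out-degree $n+1$ of each vertex together with completeness forcing every other class to meet $N^+(W)$. Your write-up is in fact slightly more explicit than the paper's about the out-degree computation and the pigeonhole step.
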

\begin{proof}
Let $\varsigma\colon V(D) \rightarrow [k]$ be a complete $k$-coloring of $D$.  Let $x= \min\{ \left|\varsigma^{-1}(i)\right| : i\in \left[k\right]\}$ be the cardinality of the smallest chromatic class of $\varsigma$. Without loss of generality,  suppose that $x= \left|\varsigma^{-1}(k)\right|$. Since $\varsigma$ defines a partition of $V(D)$, it follows that $k\leq mn/x$ and then $k\leq f_n(x)$.

On the other hand, there are $x(n+1)$ vertices from any smallest chromatic class,  then there are at most $x(n+1)+1$ chromatic classes, and it follows that $k\leq g_n(x)$. Thus, we have that $k\leq\min\{f_n(x),g_n(x)\}.$
Finally, we obtain that
\[k\leq \max\left\{ \min\{f_n(x),g_n(x)\} \textrm{ with } x\in\mathbb{N}\right\}.\]
\end{proof}

Since $f_n(x)$ is a hyperbola and $g_n(x)$ is a concave parabola, we are interested in the positive solution $x_0$ for which $f_n(x_0)=g_n(x_0)$ is the largest possible, it is not hard to see that it happens when $x\thickapprox \sqrt{m}$.

\begin{corollary}\label{cor8}
For the digraph $\overrightarrow{C}_{n^2-n+ t}[\overrightarrow{C}_{n}]$ with $1\leq t \leq n$, we have that
\[\textrm{\emph{dac}}(\overrightarrow{C}_{n^2-n+ t}[\overrightarrow{C}_{n}])\leq n^{2}.\]
\end{corollary}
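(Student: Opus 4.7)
The plan is to apply Theorem~\ref{teo7} with $m=n^2-n+t$ and pin down the value of $x\in\mathbb{N}$ that maximizes $\min\{f_n(x),g_n(x)\}$. Since $f_n(x)=\lfloor mn/x\rfloor$ is monotone decreasing while $g_n(x)=x(n+1)+1$ is monotone increasing, the pointwise minimum is maximized near the crossover. Solving $mn/x\approx x(n+1)$ with $m\approx n^2-n$ yields $x\approx\sqrt{n(n-1)}$, so the natural candidate is $x=n-1$, and the goal is to show the maximum equals exactly $n^2$.

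First I would evaluate both functions at $x=n-1$. A direct computation gives
\[
g_n(n-1)=(n-1)(n+1)+1=n^2,
\]
while the identity $mn=n^2(n-1)+tn$ gives $mn/(n-1)=n^2+t+t/(n-1)$, so
\[
f_n(n-1)=n^2+t+\lfloor t/(n-1)\rfloor\;\geq\; n^2+1
\]
because $t\geq 1$. Hence $\min\{f_n(n-1),g_n(n-1)\}=n^2$, which lower-bounds the maximum appearing in Theorem~\ref{teo7} by $n^2$.

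Second, I would verify that no other $x\in\mathbb{N}$ yields a strictly larger minimum. For $x\leq n-2$, monotonicity of $g_n$ forces $\min\{f_n(x),g_n(x)\}\leq g_n(n-2)=(n-2)(n+1)+1=n^2-n-1$. For $x\geq n$, one computes $f_n(n)=\lfloor(n^3-n^2+tn)/n\rfloor=n^2-n+t\leq n^2$, using the hypothesis $t\leq n$; since $f_n$ is decreasing, $\min\{f_n(x),g_n(x)\}\leq f_n(x)\leq n^2$ for all $x\geq n$. Combining the two cases with the lower bound from the previous step shows $\max_{x\in\mathbb{N}}\min\{f_n(x),g_n(x)\}=n^2$, and Theorem~\ref{teo7} gives $\text{dac}(\overrightarrow{C}_{n^2-n+t}[\overrightarrow{C}_n])\leq n^2$. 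The argument amounts to a one-variable discrete optimization, so I do not anticipate a genuine obstacle; the only care needed is in tracking the floor function and respecting the range $1\leq t\leq n$.
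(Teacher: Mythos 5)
Your proof is correct and follows essentially the same route as the paper: apply Theorem~\ref{teo7} and evaluate $f_n$ and $g_n$ near the crossover at $x=n-1$ and $x=n$. You are in fact slightly more complete than the paper's own proof, which only computes $\min\{f_n(x),g_n(x)\}$ at those two points and leaves implicit the monotonicity argument ($g_n$ increasing controls $x\leq n-1$, $f_n$ decreasing controls $x\geq n$) that you spell out to bound the maximum over all $x\in\mathbb{N}$.
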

\begin{proof}
Consider $x=n$ and $x=n-1$, then $f_n(n)=n^2-n+ t$, $g_n(n)=n^2+n+1$ and $\min\{f_n(n),g_n(n)\}=f_n(n)=n^2-n+ t$.  Next, $f_n(n-1)=\left\lfloor n^2+ t +\tfrac{t}{n-1} \right\rfloor$, $g_n(n-1)=n^2$ and $\min\{f_n(n-1),g_n(n-1)\}=g_n(n-1)=n^2$. It follows that $\textrm{dac}(\overrightarrow{C}_{n^2-n+ t}[\overrightarrow{C}_{n}])\leq n^2$.
\end{proof}

In order to give a lower bound for $\text{dac}(\overrightarrow{C}_{n^2-n+ t}[\overrightarrow{C}_{n}])$, we use a similar coloring of $\overrightarrow{C}_{n^2-n}[\overrightarrow{C}_{n}]$ given in Corollary \ref{cor6} and the fact of a direct cycle has dichromatic number 2.

\begin{theorem}\label{teo9}
Let $n$ be a natural number such that $n\not=4,6$.  For the digraph $\overrightarrow{C}_{n^2-n+ t}[\overrightarrow{C}_{n}]$ with $1 \leq t \leq n$, 
\[\text{\emph{dac}}(\overrightarrow{C}_{n^2-n+ t}[\overrightarrow{C}_{n}])= n^{2}.\]
\end{theorem}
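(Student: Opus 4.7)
The upper bound $n^2$ was already established in Corollary~\ref{cor8}, so I only need to exhibit an acyclic complete $n^2$-coloring of $\overrightarrow{C}_{n^2-n+t}[\overrightarrow{C}_n]$. My plan is to start from the $n^2$-minimal coloring of $\overrightarrow{C}_{n^2-n}[\overrightarrow{C}_n]$ provided by Corollary~\ref{cor6} and insert $t$ extra copies of $\overrightarrow{C}_n$ into the outer cycle, colored so as to restore the completeness lost by the insertion while staying acyclic.

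Label the copies of $\overrightarrow{C}_n$ along the new outer cycle as $H_0,H_1,\dots,H_{n^2-n+t-1}$. I would keep the coloring from Corollary~\ref{cor6} on $H_0,\dots,H_{n^2-n-1}$: each such copy uses a block of $n$ consecutive colors determined by the $\varphi$-class of its outer host, each color class is independent of size $n-1$, and each of the $n^2(n^2-1)$ ordered pairs of distinct colors is realized by exactly one arc. Writing $B$ for the block used by $H_0$, I would then color every extra copy $H_{n^2-n+i}$, for $i=0,\dots,t-1$, identically to $H_0$; the $t$ new copies therefore all reuse block $B$, and the coloring uses exactly $n^2$ colors.

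For completeness, the only arcs of the old $n^2$-minimal graph absent from $\overrightarrow{C}_{n^2-n+t}[\overrightarrow{C}_n]$ form the single bipartite block $[V(H_{n^2-n-1}),V(H_0)]$, which accounted for the $n^2$ ordered color pairs going from the block of $H_{n^2-n-1}$ into $B$. Since $H_{n^2-n}$ also uses $B$, the new block $[V(H_{n^2-n-1}),V(H_{n^2-n})]$ realizes precisely those same pairs, while every other pair of distinct colors is still realized by an unchanged arc of the old construction.

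Verifying acyclicity is the main technical point and the step I expect to demand the most care. Colors outside $B$ are unaffected and their classes remain independent. For $c\in B$, the class gains $t$ new vertices, one per extra copy; between each pair of consecutive copies using block $B$ exactly one arc lies in class $c$ (the arc from the color-$c$ vertex of one copy to the color-$c$ vertex of the next), so these arcs assemble into a directed path through $H_{n^2-n},H_{n^2-n+1},\dots,H_{n^2-n+t-1},H_0$. The remaining $n-2$ original vertices of color $c$ stay isolated, because $\varphi$ is proper on $\overrightarrow{C}_{n^2-n}$ so no two original copies whose hosts have $\varphi$-color $\varphi(v_0)$ are consecutive in the outer cycle, and the only arc entering the block of extra copies originates at $H_{n^2-n-1}$, whose block is disjoint from $B$. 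Hence every color class induces an acyclic subdigraph, and the coloring witnesses $\text{dac}(\overrightarrow{C}_{n^2-n+t}[\overrightarrow{C}_n])\ge n^2$, completing the lower bound.
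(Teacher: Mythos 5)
Your proposal is correct and follows essentially the same strategy as the paper: the upper bound is quoted from Corollary~\ref{cor8}, and the lower bound is obtained by subdividing one arc of the outer cycle in the construction of Corollary~\ref{cor6} and inserting $t$ extra copies of $\overrightarrow{C}_n$ colored with already-used color blocks so that completeness is restored and each class induces at most a directed path. The only (harmless) difference is that you reuse the single block $B$ of $H_0$ on all inserted copies, whereas the paper alternates the blocks of the outer colors $2$ and $1$ along the inserted segment.
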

\begin{proof}
We can extend a diachromatic coloring $\varsigma$ of $\overrightarrow{C}_{n^2-n}$ with the colors $[n]$ and chromatic classes $\varsigma^{-1}(i)=\{x_1^i,x_2^i,\dots,x_{n-1}^i\}$ to a coloring $\varsigma'$ of $\overrightarrow{C}_{n^2-n+ t}$ as follows.
Without loss of generality, we suppose that $x^1_1x^2_1$ is the arc with the colors $1$ and $2$ of $\overrightarrow{C}_{n^2-n}$. Make $t$ subdivisions to $x^1_1x^2_1$ obtaining a directed path, i.e. $(x^1_1,x_0,x_1,\dots,x_{t-1},x^2_1)$. Color the vertices $\{x_0,x_1\dots,x_{t-1}\}$ with the alternating colors $2,1,2,1,\dots$ then the coloring is also acyclic and complete using $n$ colors.

Now, consider the complete graph $K_n$ with the vertex-set $\{v_1,v_2,\dots,v_n\}$. Take a factorization of $K_n$ into $n-1$ Hamiltonian cycles $H_j\cong \overrightarrow{C}_{n}$, for $j\in [n-1]$. We define the coloring $\varsigma_j\colon V(H_j)\rightarrow [n]$ such as $\varsigma_j(v_k)=k$. 
The digraph $\overrightarrow{C}_{n^2-n+ t}[\overrightarrow{C}_{n}]$ has vertices $(x_j^i,v_k)$ where $v_k\in V(H_j)$ and $(x_l,v_k)$, where $v_k\in V(H_1)$ if $l$ is odd and $v_k\in V(H_2)$ if $l$ is even. Color the vertices of $\overrightarrow{C}_{n^2-n+ t}[\overrightarrow{C}_{n}]$ with $n^2$ colors such that $(x_j^i,v_k)\mapsto (\varsigma(x_j^i),\varsigma_j(v_k))$, $(x_l,v_k)\mapsto (\varsigma'(x_l),\varsigma_1(v_k))$ if $l$ is odd and $(x_l,v_k)\mapsto (\varsigma'(x_l),\varsigma_2(v_k))$ if $l$ is even. The result follows due to Corollary \ref{cor8}.
\end{proof}

\subsection{On the harmonious chromatic number}

In this subsection, we bound the harmonious chromatic number of $D=\overrightarrow{C}_{m}[\overrightarrow{C}_{n}]$ for $m$ close to $n^2-n$. A close relationship between this subsection and the previous one can be observed. 

First, we improve the upper bound of Equation \ref{eq1} for $m=n^2-n-t$ following the idea of comparing two functions.

\begin{theorem}\label{teo10}
Let $m,n$ be natural numbers such that $m,n\geq 3$. For any harmonious coloring of $D=\overrightarrow{C}_{m}[\overrightarrow{C}_{n}]$ using $k$ colors
\[k\geq \min\left\{ \max\{f_n(x),g_n(x)\} \textrm{ with } x\in\mathbb{N}\right\}\] where $f_n(x)=\left\lceil mn/x\right\rceil$ and $g_n(x)=x(n+1)+1$.
\end{theorem}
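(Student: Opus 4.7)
The plan is to dualise the proof of Theorem \ref{teo7}. Fix a harmonious $k$-coloring $\varsigma$ of $D=\overrightarrow{C}_{m}[\overrightarrow{C}_{n}]$, and let $x$ denote the size of the \emph{largest} chromatic class of $\varsigma$. The two required estimates will come from counting vertices and counting out-arcs of a largest chromatic class; both will turn into lower bounds on $k$, exactly mirroring the upper bounds produced by the smallest-class argument of Theorem \ref{teo7}.

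First I would record the elementary fact that every vertex of $D$ has out-degree exactly $n+1$: one out-arc inside its own copy of $\overrightarrow{C}_{n}$, plus $n$ out-arcs into the copy of $\overrightarrow{C}_{n}$ indexed by the unique out-neighbour along $\overrightarrow{C}_{m}$. The pigeonhole bound is then immediate: the $mn$ vertices of $D$ are partitioned into $k$ chromatic classes each of size at most $x$, so $mn\leq kx$ and hence $k\geq\lceil mn/x\rceil=f_n(x)$. For the second bound, pick a chromatic class $C$ with $|C|=x$. Since, by the definition of $k$-coloring adopted in the paper, a harmonious coloring is in particular proper, every arc incident from a vertex of $C$ has its head in a chromatic class different from the one containing $C$. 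The total number of such arcs equals $x(n+1)$, and by the harmonious condition, for each of the $k-1$ colors $c'$ distinct from the color of $C$ there is at most one arc from $C$ to $c'$. Hence $x(n+1)\leq k-1$, i.e. $k\geq x(n+1)+1=g_n(x)$.

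Combining the two inequalities gives $k\geq\max\{f_n(x),g_n(x)\}$ for the specific $x$ attained by $\varsigma$, and therefore
\[
k\geq\min\bigl\{\max\{f_n(x'),g_n(x')\}\,:\,x'\in\mathbb{N}\bigr\},
\]
which is exactly the stated bound. The only real subtlety—and what I expect to be the main thing to keep straight—is that, in contrast to Theorem \ref{teo7}, the correct choice of $x$ is now the size of the \emph{largest} chromatic class rather than the smallest, so that both estimates become lower bounds and the roles of $\min$ and $\max$ in the final optimisation are interchanged; the ceiling in the definition of $f_n$ (versus the floor in Theorem \ref{teo7}) also reflects this switch. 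Once the dualisation is set up correctly, both counts are direct analogues of those in the complete-coloring case and no further information about the lexicographic product is needed.
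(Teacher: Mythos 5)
Your proposal is correct and follows essentially the same argument as the paper: take $x$ to be the size of the largest chromatic class, derive $k\geq\lceil mn/x\rceil$ by pigeonhole and $k\geq x(n+1)+1$ by counting the $x(n+1)$ out-arcs leaving that class (at most one per other color), then pass to the minimum over $x\in\mathbb{N}$. Your version is in fact slightly more explicit than the paper's, which states the out-arc count somewhat tersely.
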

\begin{proof}
Let $\varsigma\colon V(D) \rightarrow [k]$ be an harmonious  $k$-coloring of $D$.  Let $x= \max\{ \left|\varsigma^{-1}(i)\right| : i\in \left[k\right]\}$, that is, let $x$ be the cardinality of the largest chromatic class of $\varsigma$. Without loss of generality,  suppose that $x= \left|\varsigma^{-1}(k)\right|$. Since $\varsigma$ defines a partition of $V(D)$ it follows that $k\geq mn/x$ and then $k\geq f_n(x)$.

On the other hand, there are $x(n+1)$ vertices from any largest chromatic class, and then there are at least $x(n+1)+1$ chromatic classes,  it follows that $k\geq g_n(x)$. Thus, we have that $k\geq\max\{f_n(x),g_n(x)\}.$
Finally, we obtain that
\[k\geq \min\left\{ \max\{f_n(x),g_n(x)\} \textrm{ with } x\in\mathbb{N}\right\}.\]
\end{proof}

\begin{corollary}\label{cor11}
For the digraph $\overrightarrow{C}_{n^2-n-t}[\overrightarrow{C}_{n}]$ with $1\leq t < n$, we have that
\[h(\overrightarrow{C}_{n^2-n-t}[\overrightarrow{C}_{n}])\geq n^{2}.\]
\end{corollary}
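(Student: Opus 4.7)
The plan is to apply Theorem~\ref{teo10} directly with $m=n^{2}-n-t$ and deduce that
$\min_{x\in\mathbb{N}}\max\{f_{n}(x),g_{n}(x)\}\geq n^{2}$. Since $g_{n}(x)=x(n+1)+1$ is strictly increasing in $x$ and $f_{n}(x)=\lceil mn/x\rceil$ is non-increasing in $x$, the two functions cross near $x=n-1$, so this is the value we expect to control the minimum. I would therefore split the argument into three cases depending on $x$.

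Case~1 ($x\geq n$): here $g_{n}(x)\geq g_{n}(n)=n(n+1)+1=n^{2}+n+1>n^{2}$, so the max exceeds $n^{2}$.
Case~2 ($x=n-1$): here $g_{n}(n-1)=(n-1)(n+1)+1=n^{2}$, so the max is already at least $n^{2}$.
Case~3 ($1\leq x\leq n-2$): by monotonicity of $f_{n}$ it is enough to check $f_{n}(n-2)\geq n^{2}$. A direct calculation gives
\[
\frac{mn}{n-2}=\frac{n(n^{2}-n-t)}{n-2}\geq n^{2}\;\Longleftrightarrow\; n^{3}-n^{2}-tn\geq n^{3}-2n^{2}\;\Longleftrightarrow\; n\geq t,
\]
which holds by hypothesis $t<n$. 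Taking ceilings preserves this, so $f_{n}(n-2)\geq n^{2}$.

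Combining the three cases, $\max\{f_{n}(x),g_{n}(x)\}\geq n^{2}$ for every positive integer $x$, and Theorem~\ref{teo10} yields $h(\overrightarrow{C}_{n^{2}-n-t}[\overrightarrow{C}_{n}])\geq n^{2}$. The only potentially delicate step is Case~3, but it is just a one-line algebraic check. The real content of the proof is recognizing that the hypotheses $1\leq t<n$ and the choice $m=n^{2}-n-t$ are calibrated precisely so that Case~2 attains $n^{2}$ on the nose while Case~3 never drops below it; any relaxation of $t<n$ would break the boundary inequality $n\geq t$.
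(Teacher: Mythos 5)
Your proof is correct and follows essentially the same route as the paper: apply Theorem~\ref{teo10} and show that the crossing of $f_n$ and $g_n$ near $x=n-1$ forces $\max\{f_n(x),g_n(x)\}\geq n^2$ for every $x$. In fact your version is slightly more complete, since the paper only computes the two values $x=n-1$ and $x=n-2$ and leaves the monotonicity argument covering all other $x$ implicit, whereas you spell it out; your algebraic reduction of $f_n(n-2)\geq n^2$ to $n\geq t$ is also cleaner than the paper's stated (and slightly garbled) closed forms.
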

\begin{proof}
Consider $x=n-1$ and $x=n-2$, then $f_n(n-1)=\left\lceil n^2-\tfrac{t}{n-1} \right\rceil$, $g_n(n-1)=n^2$ and $\max\{f_n(n-1),g_n(n-1)\}=g_n(n-1)=n^2$.  Next, $f_n(n-2)=\left\lceil n^2+n+1-\tfrac{t-2}{n-1} \right\rceil$, $g_n(n-2)=n^2-n-1$ and $\max\{f_n(n-2),g_n(n-2)\}=f_n(n-2)=\left\lceil n^2+n+1-\tfrac{t-2}{n-1} \right\rceil$. It follows that $h(\overrightarrow{C}_{n^2-n-t}[\overrightarrow{C}_{n}])\geq n^2$.
\end{proof}

In order to give an upper bound for $h(\overrightarrow{C}_{n^2-n-t}[\overrightarrow{C}_{n}])$, we color $\overrightarrow{C}_{n^2-n}[\overrightarrow{C}_{n}]$ using the technique given in Corollary \ref{cor6}, and a particular unfold of $K_n-A$ where $A$ is a particular set of edges.

\begin{theorem}\label{teo12}
Let $n$ be a natural number such that $n\not=4,6$.  For the digraph $\overrightarrow{C}_{n^2-n-t}[\overrightarrow{C}_{n}]$ with $1 \leq t < n$, we have that
\[h(\overrightarrow{C}_{n^2-n-t}[\overrightarrow{C}_{n}])= n^{2}.\]
\end{theorem}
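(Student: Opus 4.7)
By Corollary~\ref{cor11}, the lower bound $h(\overrightarrow{C}_{n^2-n-t}[\overrightarrow{C}_n]) \geq n^2$ is already in hand, so the proof reduces to exhibiting a harmonious $n^2$-coloring of $D := \overrightarrow{C}_{n^2-n-t}[\overrightarrow{C}_n]$. The plan mirrors Corollary~\ref{cor6} but on a shortened outer cycle: I would realize $\overrightarrow{C}_{n^2-n-t}$ as an unfold along an Eulerian circuit of $K_n - A$, where $A$ is a judicious arc set of cardinality $t$, and color the inner copies via a $\overrightarrow{C}_n$-factorization of $K_n$ (which exists since $n \neq 4, 6$).

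For the first step, I would choose $A$ so that $K_n - A$ remains connected and Eulerian; equivalently, $A$ must be balanced in in- and out-degree at every vertex of $K_n$. Natural choices are a union of $t/2$ reciprocal arc pairs (undirected edges of $K_n$) when $t$ is even, or a directed cycle of length $t$ in $K_n$ when $t \geq 3$ is odd. Unfolding $K_n - A$ along its Eulerian circuit produces $\overrightarrow{C}_{n^2-n-t}$ together with an induced outer $n$-coloring $\varphi$ whose chromatic class at $v_i \in V(K_n)$ has size $q_i = \deg^+_{K_n - A}(v_i)$. Following the Corollary~\ref{cor6} recipe, I would assign to the $q_i$ outer vertices of color $i$ the first $q_i$ factors $H_1, \dots, H_{q_i}$ of a fixed $\overrightarrow{C}_n$-factorization, and color the $l$-th inner vertex of the $j$-th such copy by $(i, l) \in [n] \times [n] \cong [n^2]$.

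Harmoniousness then splits into two checks. The inner pair $((i, l), (i, l'))$ arises only when $v_l v_{l'}$ is an arc of the factor $H_j$ used by the copy in question; arc-disjointness of the $\overrightarrow{C}_n$-factorization prevents this pair from repeating across the $q_i$ copies of color $i$. The cross pair $((i, l), (i', l'))$ with $i \neq i'$ arises only from the unique outer arc projecting to $v_i v_{i'}$ in $K_n - A$, and does not appear at all if $v_i v_{i'} \in A$. Consequently every ordered pair of distinct colors is realized by at most one arc of $D$, giving $h(D) \leq n^2$ and, combined with Corollary~\ref{cor11}, the desired equality.

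The main obstacle is the boundary case $t = 1$: removing any single arc of $K_n$ unbalances its tail and head, so $K_n - A$ is not Eulerian and the unfold into $\overrightarrow{C}_{n^2-n-1}$ is not available. This case therefore demands a separate construction---for instance, by permitting a single repeated outer-color pair in $\varphi$ and desynchronizing the factor assignments at the two pairs of adjacent outer copies responsible for that repetition, so that the offending $n\times n$ cross-pair block in $[n]^2 \times [n]^2$ splits into disjoint sub-blocks. Verifying this workaround uniformly across all admissible $n$ is the most intricate step of the proof.
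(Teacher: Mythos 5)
Your construction for $2 \le t < n$ is sound and is essentially the route the paper takes: delete a set $A$ of $t$ arcs from the complete symmetric digraph $K_n$, unfold what remains along an Eulerian circuit to obtain $\overrightarrow{C}_{n^2-n-t}$ with a harmonious outer $n$-coloring whose class at $v_i$ has size $\deg^+_{K_n-A}(v_i)\le n-1$, and then install distinct factors of a $\overrightarrow{C}_n$-factorization on the copies within each class. Your insistence that $A$ be balanced (a union of digons for even $t$, a directed $t$-cycle for odd $t\ge 3$) is exactly what is needed for the unfold to exist, and your two harmoniousness checks are complete once one adds the immediate remark that an inner pair and a cross pair can never coincide because the outer coloring is proper. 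In this respect your write-up is more careful than the paper's, whose set $A$ of ``$t$ edges incident to a single vertex $u$'' does not leave a balanced digraph with $n^2-n-t$ arcs, so its claimed unfold into $\overrightarrow{C}_{n^2-n-t}$ is not available as stated.

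The gap you flag at $t=1$ is, however, fatal rather than merely intricate, and no desynchronization of factor assignments can close it in general: for $(n,t)=(3,1)$ the statement itself is false. Indeed, in any proper harmonious coloring of $\overrightarrow{C}_{5}[\overrightarrow{C}_{3}]$ with at most $9$ colors, each copy of $\overrightarrow{C}_3$ receives a $3$-set $S_s$ of colors, and consecutive sets must be disjoint (a shared color gives a monochromatic arc in the complete bipartite bundle between them). Hence each color occurs in at most two of the five copies, so at least $15-9=6$ colors occur in exactly two copies, each such pair of copies forming a diagonal $\{s,r\}$ of the pentagon. Harmoniousness of the cross arcs forces the set of diagonals $\{s,r\}$ with $S_s\cap S_r\ne\emptyset$ to be independent under the rotation $\{s,r\}\mapsto\{s+1,r+1\}$, which acts as a $5$-cycle on the five diagonals; thus at most two diagonals are occupied, each by at most $|S_s\cap S_r|\le 3$ colors, so equality holds throughout. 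But any two rotation-independent diagonals share a pentagon vertex $r$, forcing $S_{r-2}=S_r=S_{r+2}$ while $\{r-2,r+2\}$ is an edge of the pentagon and so requires $S_{r-2}\cap S_{r+2}=\emptyset$ --- a contradiction. Therefore $h(\overrightarrow{C}_{5}[\overrightarrow{C}_{3}])\ge 10$, and Theorem~\ref{teo12} cannot be proved for $t=1$ by your method or any other. You should restrict your argument (and the statement) to $2\le t<n$, where your proof is correct, and treat $t=1$ separately as a likely false case.
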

\begin{proof}
Consider the graph $K_n-A$ where $A$ is a set of edges incident to a vertex $u\in V(K_n)$, with $|A| = t$.  Now, we obtain the graph $G$ which is an unfold $K_n-A$ such that each edge of $A$ is a leaf where its vertex of degree 1 is the corresponding vertex $u$ in $K_n-A$.

Next, we obtain the directed cycle $\overrightarrow{C}_{n^2-n-t}$ to unfold $G$ since $G$ it is an Eulerian digraph.  We can color $\overrightarrow{C}_{n^2-n-t}[\overrightarrow{C}_{n}]$ following the same technique of Theorem \ref{teo2} and obtaining our coloring.

The result follows due to Corollary \ref{cor11} and this upper bound. 
\end{proof}

\section{Recursive results}\label{section4}

The lexicographic product of digraphs $D$ and $H$ is an operation that produces a digraph $D[H]$,  and then we can obtain the digraph $(D[H])[H]$ and so on. We define	 $D[H]^{i}:=(D[H]^{i-1})[H]$ with $D[H]^1:=D[H]$. If $D\cong H$, we write $[H]^{i+1}$.

Note that Corollary \ref{cor4} produces a $k$-minimal digraph for which, their chromatic classes $\{v^l_{u_i,1},v^l_{u_i,2},\dots,v^l_{u_i,q}\}$ have cardinality equal to $q$, therefore this digraph and the relabel $H$-factorizations fulfills the hypothesis, hence, a recursive construction can be done given an initial digraph $D$ and an $H$-factorization.

\begin{corollary}\label{cor13}
Let $D$ be a $k$-minimal digraph of order $n$ with a balanced $k$-coloring, such that $qk=n$. If $K_m$ has a relabel $H$-factorization into $q$ factors, then $D[H]^i$ is $k^im$-minimal with a balanced  $k^im$-coloring, for all $i\in\mathbb{Z}^+$.
\end{corollary}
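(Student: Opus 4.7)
The plan is to prove the statement by induction on $i$, invoking Corollary~\ref{cor4} at each step. The base case $i=1$ is exactly Corollary~\ref{cor4}, which provides a minimal digraph $D[H]$ with a balanced coloring; nothing more is needed at this step.

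The inductive step rests on one observation, which is in fact already flagged in the paragraph preceding the corollary statement: the balanced coloring produced by Corollary~\ref{cor4} always has chromatic classes of cardinality exactly $q$. Tracing the coloring $\varsigma$ from the proof of Theorem~\ref{teo2} confirms this, since the class assigned color $c(i,l)$ in $D[H]$ is $\{v^l_{u_{i,1}},v^l_{u_{i,2}},\dots,v^l_{u_{i,q}}\}$, whose cardinality coincides with the number of factors used to build the Zykov sum. Consequently, if the inductive hypothesis holds for $i$, then $D[H]^i$ is a minimal digraph with a balanced coloring whose chromatic classes still have size $q$, so it satisfies the hypothesis of Corollary~\ref{cor4} with exactly the same value of $q$. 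The very same relabel $H$-factorization of $K_m$ into $q$ factors guaranteed by the hypothesis of Corollary~\ref{cor13} is therefore still applicable.

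Applying Corollary~\ref{cor4} with $D$ replaced by $D[H]^i$ then yields that $(D[H]^i)[H]=D[H]^{i+1}$ is again minimal, with a balanced coloring whose chromatic classes have size $q$; the number of colors is multiplied by $m$ at each step, advancing the inductive claim to $i+1$. The only work is the bookkeeping verification that the class size $q$ is invariant under the operation $(\cdot)[H]$ when $K_m$ has a relabel $H$-factorization into $q$ factors, which is the main (and quite mild) obstacle; once this invariance is in place, no combinatorial ingredient beyond Corollary~\ref{cor4} is required to close the induction.
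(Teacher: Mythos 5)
Your approach is exactly the paper's: the paper gives no formal proof of Corollary~\ref{cor13} beyond the prefatory remark, which is precisely your induction --- apply Corollary~\ref{cor4}, observe that the chromatic classes of the output again have cardinality $q$, and iterate. Your verification that the class size $q$ is invariant (reading it off from the classes $\{v^l_{u_{i,1}},\dots,v^l_{u_{i,q}}\}$ in the proof of Theorem~\ref{teo2}) is the right and only substantive check.

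There is, however, a bookkeeping discrepancy you should not gloss over. Each application of Corollary~\ref{cor4} multiplies the number of colors by $m$: starting from $k$ colors, after $i$ iterations your induction produces a $km^i$-minimal digraph, not a $k^i m$-minimal one. Your closing sentence, ``the number of colors is multiplied by $m$ at each step, advancing the inductive claim to $i+1$,'' does not actually close the induction as stated, since multiplying $k^i m$ by $m$ yields $k^i m^2$, not $k^{i+1}m$. The two expressions $km^i$ and $k^i m$ coincide only when $k=m$ (or $i=1$), which is the situation of Corollary~\ref{cor14} (there $k=m=n$ and both give $n^{i+1}$), but not of Corollary~\ref{cor19}, where $k=n+1$ and $m=n$. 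So either state explicitly that what the induction establishes is $km^i$-minimality --- in which case the exponent in the corollary (and in Corollary~\ref{cor19}) is misplaced --- or explain why the color count should grow by a factor of $k$ rather than $m$ at each step; as written, the argument proves a statement different from the one asserted.
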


Hence, we can extend Corollary \ref{cor6} as follows.

\begin{corollary}\label{cor14}
Let $n$ be a natural number such that $n\not=4,6$. The digraph $D=\overrightarrow{C}_{n^2-n}[\overrightarrow{C}_n]^i$ is $n^{i+1}$-minimal with a balanced $n^{i+1}$-coloring. Hence, for all $i\in\mathbb{Z}^+$,
\[\text{\emph{dac}}(D)=h(D)=n^{i+1}.\]
\end{corollary}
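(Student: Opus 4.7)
The plan is to read Corollary \ref{cor14} as the direct specialization of the recursive Corollary \ref{cor13}, taking the initial digraph $D_{0}=\overrightarrow{C}_{n^{2}-n}$ and the lexicographic factor $H=\overrightarrow{C}_{n}$. Once the hypotheses of Corollary \ref{cor13} are checked, the minimality claim is immediate, and the equality $\text{dac}(D)=h(D)$ will follow by combining Theorem \ref{teo1} with the chain of inequalities in Equation (\ref{eq1}).

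To verify the hypotheses of Corollary \ref{cor13} for $D_{0}$, I would invoke Remark \ref{identificacion} together with the observation made immediately after it: $K_{n}$ unfolds, via a Eulerian circuit, into $\overrightarrow{C}_{n(n-1)}$, so $\overrightarrow{C}_{n^{2}-n}$ is $n$-minimal and its induced $n$-coloring is balanced with chromatic classes of cardinality $q=n-1$. In particular $qk=(n-1)n=n^{2}-n$ coincides with the order of $D_{0}$. The second hypothesis is the existence of a relabel $\overrightarrow{C}_{n}$-factorization of $K_{n}$ into $n-1=q$ factors, which exists for every $n\neq 4,6$ by the results already cited before Corollary \ref{cor6}.

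With these inputs, Corollary \ref{cor13} applies with $k=m=n$, and since in this specialization $k^{i}m=n^{i+1}$, it gives that $D=\overrightarrow{C}_{n^{2}-n}[\overrightarrow{C}_{n}]^{i}$ is $n^{i+1}$-minimal with a balanced $n^{i+1}$-coloring, for every $i\in\mathbb{Z}^{+}$. The minimality immediately gives $\text{dac}(D)=n^{i+1}$, and by Theorem \ref{teo1} the arc set satisfies $|A(D)|=n^{i+1}(n^{i+1}-1)$, so the quantity $(1+\sqrt{1+4|A(D)|})/2$ evaluates to exactly $n^{i+1}$; Equation (\ref{eq1}) then yields $n^{i+1}\leq h(D)$. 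For the matching upper bound I would note that the balanced $n^{i+1}$-coloring produced by the recursion is acyclic and complete on a digraph whose number of arcs equals the number of ordered pairs of distinct colors, so each such pair is realized by exactly one arc, which is the defining property of a harmonious coloring; hence $h(D)\leq n^{i+1}$.

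No step presents a real obstacle: the argument is essentially bookkeeping showing that the class-size invariant $q=n-1$ propagates through the recursion, so that the same $\overrightarrow{C}_{n}$-factorization of $K_{n}$ can be reused at every level. The only external input is the classical Hamiltonian-cycle factorization of $K_{n}$, which is precisely what forces the exclusion of $n=4,6$.
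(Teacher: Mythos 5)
Your proposal is correct and follows exactly the route the paper intends: Corollary \ref{cor14} is stated as the specialization of the recursive Corollary \ref{cor13} to $D_{0}=\overrightarrow{C}_{n^{2}-n}$ (which is $n$-minimal with balanced classes of size $q=n-1$ by Remark \ref{identificacion}) and the Hamiltonian-cycle factorization of $K_{n}$ into $n-1$ factors for $n\neq 4,6$, with the final equality $\text{dac}(D)=h(D)=n^{i+1}$ following from $t$-minimality, Theorem \ref{teo1}, and Equation (\ref{eq1}) just as you describe. Your explicit verification that the class size $q=n-1$ is preserved through the recursion, and that the resulting complete acyclic coloring is also harmonious, fills in precisely the bookkeeping the paper leaves implicit.
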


Now, we determine the  dichromatic number of  $D=\overrightarrow{C}_{n^2-n}[\overrightarrow{C}_{n}]^i$.  

First, we adapt a result of \cite{MR1817491} in the following lemma, namely, Proposition 32 $(iii)$ and Proposition 34 together with  Corollary 43.  We omit the proof because it is analogous to the original one generalizing from transitive tournaments to acyclic digraphs.  

\begin{theorem}\label{teo15} 
Let $D,H$ be digraphs such that $D$ has order $m$ and $\text{\emph{dc}}(H)=k$. If $r$ is the maximum order  of an acyclic set of vertices of $D$,  then
\begin{itemize}
\item $\text{\emph{dc}}(D[H]) \geq \left \lceil \frac{k\cdot m }{r} \right\rceil.$

\item Moreover, if $D$ contains a spanning subdigraph isomorphic to a circulant digraph $\overrightarrow{C}_m(J)$ such that the induced subdigraph of the vertices $\{0,1,\dots,r\}$ is acyclic, then 
\[\text{\emph{dc}}(D[H]) = \left \lceil \frac{k\cdot m }{r} \right\rceil.\]
\end{itemize}
\end{theorem}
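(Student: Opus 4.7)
The plan is to handle the inequality and the equality separately: a double-count argument delivers the lower bound, and an explicit acyclic coloring built from the circulant ordering provides the matching upper bound.

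For the lower bound, let $\psi$ be an acyclic coloring of $D[H]$ using $t=\text{dc}(D[H])$ colors and, for each color $c$, set $I_c:=\{u\in V(D):\text{some vertex of }H_u\text{ carries color }c\}$. First I show that the subdigraph $D[I_c]$ is acyclic: given a directed cycle $u_0\to u_1\to\cdots\to u_{s-1}\to u_0$ in $D[I_c]$, choose for each $\ell$ a vertex $v_\ell\in H_{u_\ell}$ with color $c$; since every arc from $H_{u_\ell}$ to $H_{u_{\ell+1}}$ exists in $D[H]$, the lift is a monochromatic directed cycle, contradicting acyclicity of $\psi^{-1}(c)$. Hence $|I_c|\le r$. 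On the other hand the restriction of $\psi$ to each copy $H_u$ is itself an acyclic coloring of $H$, and so uses at least $k$ distinct colors, meaning that each $u$ appears in at least $k$ sets $I_c$. Double-counting the pairs $(c,u)$ with $u\in I_c$ gives $mk\le \sum_{c}|I_c|\le tr$, hence $t\ge \lceil mk/r\rceil$.

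For the upper bound, fix an acyclic $k$-coloring $\phi\colon V(H)\to[k]$ and identify $V(D)=\{0,1,\dots,m-1\}$ through the labels of the circulant subdigraph, so that every cyclic window of $r$ consecutive indices induces an acyclic subdigraph (by the stated hypothesis together with the rotational automorphism of $\overrightarrow{C}_m(J)$). Order the $mk$ ``slots'' $(i,j)\in\{0,\dots,m-1\}\times[k]$ by sweeping $i$ for each fixed $j$, namely
\[(0,1),(1,1),\dots,(m-1,1),(0,2),\dots,(m-1,k),\]
cut this list into $t:=\lceil mk/r\rceil$ consecutive blocks, each of length at most $r$, and color the vertex $(u,v)\in V(D[H])$ by the block containing the slot $(u,\phi(v))$.

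The bulk of the proof is verifying acyclicity. The slots in one block occupy at most $r$ consecutive positions, so their $i$-coordinates are at most $r$ consecutive residues modulo $m$; these are pairwise distinct and induce an acyclic subset of $V(D)$. At the same time, each copy $H_u$ meets a given block in at most one $\phi$-class, which is acyclic in $H_u$. A hypothetical monochromatic directed cycle in $D[H]$ would project to a closed walk inside this window; after deleting consecutive repetitions the walk is either trivial, forcing the cycle to sit inside a single $\phi$-class and contradicting its acyclicity, or else contains a directed cycle inside the window, contradicting acyclicity of the window. Hence every color class is acyclic and $\text{dc}(D[H])\le t$.

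The step I expect to be the main obstacle is a careful reading of the hypothesis ``induced subdigraph of the vertices $\{0,1,\dots,r\}$ is acyclic''---which appears to be an off-by-one statement about the $r$-window---and the subsequent use of the rotational symmetry of $\overrightarrow{C}_m(J)$ to transfer acyclicity to every cyclic shift. Once that is pinned down, both the cycle-lifting argument for the lower bound and the case analysis of a putative monochromatic cycle for the upper bound are routine.
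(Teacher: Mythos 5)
The paper gives no proof of this theorem (it explicitly omits it as ``analogous to the original''), so you are supplying the missing argument. Your lower bound is correct and complete: the set $I_c$ of copies met by color $c$ induces an acyclic subdigraph of $D$ because a directed cycle through distinct copies lifts to a monochromatic directed cycle (all arcs between $H_u$ and $H_v$ are present when $uv\in A(D)$), so $|I_c|\leq r$; each copy is an isomorph of $H$ and therefore meets at least $k$ color classes; double counting gives $t\geq\lceil km/r\rceil$. Your upper-bound construction (sweep the $mk$ slots $(i,\phi\text{-class})$, cut into $\lceil km/r\rceil$ blocks of length at most $r$, and note that each block meets at most $r$ cyclically consecutive copies, each in a single acyclic $\phi$-class) together with your projection argument for acyclicity is also the right one.

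The one genuine gap is exactly the step you flag: deducing that \emph{every} cyclic window of $r$ consecutive labels induces an acyclic subdigraph of $D$. The rotation $i\mapsto i+1$ is an automorphism of $\overrightarrow{C}_m(J)$ but not of $D$, so acyclicity of the single window $\{0,\dots,r-1\}$ in $D$ does not propagate to the other windows when $D$ properly contains the circulant; and if the hypothesis is read as acyclicity inside $\overrightarrow{C}_m(J)$ only, it says nothing about the extra arcs of $D$ at all. This cannot be repaired: the statement as written is actually false in that generality. For instance, let $D$ be $\overrightarrow{C}_6(\{1\})$ with an added digon between two antipodal vertices and let $H=K_5$; then $r=5$, one $5$-window is acyclic, and the formula predicts $\mathrm{dc}(D[H])=6$, but the only acyclic $5$-sets of $D$ are the complements of the two digon vertices, and the equality case of your own double count (each class meeting exactly $5$ copies, each copy met exactly $5$ times) is then impossible, so $\mathrm{dc}(D[H])\geq 7$. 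The correct hypothesis, and the one your proof actually uses, is that under the circulant labelling \emph{every} set of $r$ cyclically consecutive vertices induces an acyclic subdigraph of $D$ (note also the off-by-one in the statement: $\{0,1,\dots,r\}$ has $r+1$ vertices and can never be acyclic if $r$ is the maximum). With that reading your proof is complete, and it covers every use of the theorem in the paper, since in Lemma \ref{lema17} and Theorem \ref{teo18} the digraph $D$ is itself a directed cycle and hence equal to the circulant.
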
 

Next, we use a result concerning the recurrence relation that appears in the solution of the well-known Josephus Problem,  see \cite{MR1397498,MR1108748} and see \cite{MR3711038} for an application in digraphs.  
	
\begin{theorem}\cite{MR1108748}\label{teo16}
Consider the recurrence relation $T_n(i)= \left \lceil \frac{n}{n-1} T_n(i-1)\right\rceil$ with $i\geq 1$ and $T_n(0)=1$. For each integer $n\geq 2$ there is  real number $c_n$ such that 
\begin{itemize}
\item $T_n(i)=c_n\left(\frac{n}{n-1}\right)^i+ 	e_{i,n}$ where $n\geq 4$ and $-n+2<e_{i,n}\leq 0$.
\item $T_3(i)=\left \lfloor c_3\left(\frac{3}{2}\right)^i\right\rfloor$ where $c_3\approx1.62227\dots$ is an irrational number.
\item $T_2(i)=2^i$.
\end{itemize}
\end{theorem}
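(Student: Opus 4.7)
The plan is to rewrite the recurrence additively. Set $r := n/(n-1)$ and $\delta_i := T_n(i) - r\, T_n(i-1)$, so that the ceiling operation forces $\delta_i \in [0,1)$. Iterating yields the telescoping identity
\[
T_n(i) \;=\; r^{i} + \sum_{j=1}^{i} \delta_j\, r^{i-j}, \qquad \text{equivalently,} \qquad \frac{T_n(i)}{r^{i}} \;=\; 1 + \sum_{j=1}^{i}\frac{\delta_j}{r^{j}}.
\]
The natural candidate for the constant is $c_n := 1 + \sum_{j=1}^{\infty} \delta_j/r^{j}$, and the error is the tail $e_{i,n} = -\, r^{i} \sum_{j>i} \delta_j/r^{j}$, which is manifestly non-positive. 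The case $n=2$ is immediate: since $r=2$ makes $r\,T_2(i-1)$ an integer, every $\delta_i$ vanishes and $T_2(i)=2^{i}$.

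For $n\ge 3$ I would first sharpen the range of $\delta_i$ by a division-with-remainder step. Writing $T_n(i-1)=q(n-1)+s$ with $0\le s\le n-2$, a short calculation gives $\lceil r\,T_n(i-1)\rceil = nq + s + \mathbf{1}[s\ge 1]$, whence
\[
\delta_i \in \left\{0,\; \tfrac{1}{n-1},\; \tfrac{2}{n-1},\; \ldots,\; \tfrac{n-2}{n-1}\right\}.
\]
With this sharper upper bound $\delta_i\le (n-2)/(n-1)$ and the geometric sum $r^{i}\sum_{j>i}r^{-j}=1/(r-1)=n-1$ in hand, one obtains $0 \le r^{i}\sum_{j>i}\delta_j/r^{j} < (n-2)$. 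Therefore the series defining $c_n$ converges absolutely and $e_{i,n}\in(-(n-2),\,0]$, proving the first bullet for all $n\ge 4$.

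The $n=3$ case is where the main obstacle lies. The argument above already gives $T_3(i) = c_3(3/2)^{i} + e_{i,3}$ with $e_{i,3}\in(-1,0]$, hence $c_3(3/2)^{i}-1 < T_3(i) \le c_3(3/2)^{i}$. To upgrade this to $T_3(i)=\lfloor c_3(3/2)^{i}\rfloor$ one must exclude the possibility $c_3(3/2)^{i}\in \mathbb{Z}$, which reduces to showing $c_3$ is irrational. Here $\delta_j\in\{0,\,1/2\}$ is completely determined by the parity sequence $\epsilon_j := T_3(j-1)\bmod 2$, and $2(c_3-1)=\sum_{j\ge 1}\epsilon_j(2/3)^{j}$. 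If $c_3$ were rational, the shift identity $(3/2)\sum_{j\ge 1}\epsilon_j(2/3)^{j}-\epsilon_1=\sum_{k\ge 1}\epsilon_{k+1}(2/3)^{k}$, iterated, would force $(\epsilon_j)$ to be eventually periodic, which fails for this Collatz-type iteration. I expect this irrationality step to be the only serious difficulty; the rest of the argument is bookkeeping on a single geometric series.
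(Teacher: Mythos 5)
The paper does not prove this statement; it is quoted from the cited reference, so there is no in-paper argument to compare against and your proposal must stand on its own. Your telescoping framework is sound and is the standard route: $T_n(i)=r^i+\sum_{j=1}^i\delta_j r^{i-j}$ with $\delta_j=\lceil rT_n(j-1)\rceil-rT_n(j-1)$, the definition of $c_n$ as the convergent series, $e_{i,n}\le 0$, the case $n=2$, and the sharpened range $\delta_j\in\{0,\tfrac{1}{n-1},\dots,\tfrac{n-2}{n-1}\}$ are all correct. Two points need repair. First, your bound only gives $e_{i,n}\ge -(n-2)$; the theorem asserts the strict inequality, so you must rule out $\delta_j=\tfrac{n-2}{n-1}$ for \emph{all} $j>i$. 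This is fixable: that scenario forces $T_n(j)+(n-2)=r^{j-i}\bigl(T_n(i)+(n-2)\bigr)$, hence $(n-1)^{j-i}$ divides the fixed integer $T_n(i)+n-2$ for every $j$, which is impossible for $n\ge 3$. Say this.

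The second issue is more serious and concerns $n=3$, where you have misdiagnosed where the difficulty lies. The floor formula does \emph{not} require excluding $c_3(3/2)^i\in\mathbb{Z}$: once you have $c_3(3/2)^i-1<T_3(i)\le c_3(3/2)^i$, the unique integer in the half-open interval $(x-1,x]$ is $\lfloor x\rfloor$ whether or not $x$ is an integer, so $T_3(i)=\lfloor c_3(3/2)^i\rfloor$ follows immediately from $e_{i,3}\in(-1,0]$. What genuinely remains is the theorem's side assertion that $c_3$ is irrational, and your sketch does not prove it. The implication ``$c_3$ rational $\Rightarrow$ $(\epsilon_j)$ eventually periodic'' is the converse of the easy direction and is false in general for representations in base $3/2$: iterating your shift identity sends $p/q$ to $3p/(2q)-\epsilon_1$, so the orbit is not confined to a finite set of rationals and no periodicity is forced. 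Nor do you substantiate the claim that $(\epsilon_j)$, i.e. the parity sequence of $T_3(j)$, fails to be eventually periodic; that is itself a hard distribution question of Collatz/Mahler type. Indeed, the arithmetic nature of this Odlyzko--Wilf constant is, to the best of current knowledge, unresolved, and the cited source obtains the floor formula exactly as above, without settling whether $c_3$ is irrational. You should prove the floor formula directly from the interval containment and either drop the irrationality step or flag it as not established by your argument.
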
 

Now,  we can determine $\text{dc}([\overrightarrow{C}_{n}]^i)$ and then $\text{dc}(\overrightarrow{C}_{n^2-n}[\overrightarrow{C}_{n}]^i)$.
 
\begin{lemma}\label{lema17}
$\text{\emph{dc}}([\overrightarrow{C}_{n}]^i)=T_n(i)$.
\end{lemma}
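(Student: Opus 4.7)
The plan is to prove the identity by induction on $i\geq 1$, using the associativity of the lexicographic product so that at every step Theorem \ref{teo15} is applied with a single copy of $\overrightarrow{C}_n$ as the outer factor.

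Setting $[\overrightarrow{C}_n]^{1}:=\overrightarrow{C}_n$, the base case reads $\text{dc}(\overrightarrow{C}_n)=2$, which holds because one color does not suffice (the whole digraph is a directed cycle) while two colors do (removing one vertex leaves a directed path). On the arithmetic side, $T_n(1)=\lceil n/(n-1)\rceil = 2$ since $T_n(0)=1$.

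For the inductive step, suppose $\text{dc}([\overrightarrow{C}_n]^{i})=T_n(i)$. Because the lexicographic product is associative, the definition can be rewritten as
\[
[\overrightarrow{C}_n]^{i+1} \;=\; [\overrightarrow{C}_n]^{i}\bigl[\overrightarrow{C}_n\bigr] \;=\; \overrightarrow{C}_n\bigl[\,[\overrightarrow{C}_n]^{i}\,\bigr].
\]
I now invoke Theorem \ref{teo15} with $D=\overrightarrow{C}_n$ (of order $m=n$) and $H=[\overrightarrow{C}_n]^{i}$ (for which $k:=\text{dc}(H)=T_n(i)$ by the inductive hypothesis). The maximum order of an acyclic set of vertices of $\overrightarrow{C}_n$ is $r=n-1$, realized by any $(n-1)$-vertex directed path. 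Moreover, $\overrightarrow{C}_n$ is itself the circulant digraph $\overrightarrow{C}_n(\{1\})$, and the subdigraph induced on the initial segment $\{0,1,\dots,n-2\}$ is precisely such a directed path, hence acyclic; the circulant hypothesis of Theorem \ref{teo15} is therefore satisfied. It follows that
\[
\text{dc}([\overrightarrow{C}_n]^{i+1}) \;=\; \Bigl\lceil \tfrac{k\cdot m}{r} \Bigr\rceil \;=\; \Bigl\lceil \tfrac{n}{n-1}\, T_n(i) \Bigr\rceil \;=\; T_n(i+1),
\]
closing the induction.

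The only delicate point is choosing the correct way to peel off the recursion. Reading $[\overrightarrow{C}_n]^{i+1}=[\overrightarrow{C}_n]^{i}[\overrightarrow{C}_n]$ literally, with the complex power on the \emph{outside}, would force one to compute the maximum order of an acyclic set inside $[\overrightarrow{C}_n]^{i}$ and to exhibit a circulant spanning subdigraph of it with an acyclic initial segment of the right length; both are awkward by induction. Exploiting associativity to bring a single $\overrightarrow{C}_n$ to the outside makes both hypotheses of Theorem \ref{teo15} transparent and produces the recursion of Theorem \ref{teo16} on the nose.
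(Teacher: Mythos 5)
Your proof is correct and follows essentially the same route as the paper: apply Theorem \ref{teo15} with a single $\overrightarrow{C}_n$ as the outer factor and an inner factor of known dichromatic number, obtain the recurrence $T_n(i+1)=\bigl\lceil\tfrac{n}{n-1}T_n(i)\bigr\rceil$, and finish with Theorem \ref{teo16}. Your explicit appeal to associativity of the lexicographic product is a welcome addition, since the paper's definition $[H]^{i+1}=[H]^{i}[H]$ places the large power on the outside and its phrase ``repeating this argument $i-1$ times'' silently relies on exactly the rewriting $[\overrightarrow{C}_n]^{i+1}\cong\overrightarrow{C}_n\bigl[[\overrightarrow{C}_n]^{i}\bigr]$ that you make precise.
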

\begin{proof}
Clearly, the maximal order of an acyclic set of vertices of $\overrightarrow{C}_{n}$ is $n-1$, and $\overrightarrow{C}_{n}$ contains an isomorphic copy of a circulant digraph over $\mathbb{Z}_n$ as a spanning subdigraph.
By Theorem \ref{teo15}, it follows that $\text{dc}(\overrightarrow{C}_{n}[\overrightarrow{C}_{n}])=\left \lceil \frac{2n}{n-1} \right\rceil$ and since $\left \lceil \frac{n}{n-1}  \right \rceil=2$, thus 
\[\text{dc}(\overrightarrow{C}_{n}[\overrightarrow{C}_{n}])=\left \lceil \frac{n}{n-1}\left \lceil \frac{n}{n-1} \right\rceil \right\rceil.\]
Repeating this argument $i-1$ times, it follows that
Theorem \ref{teo16} completes the proof.
\end{proof}

As a consequence of Theorem \ref{teo15}  and Lemma \ref{lema17} it follows that
	
\begin{theorem}\label{teo18}
$\text{\emph{dc}}(\overrightarrow{C}_{n^2-n}[\overrightarrow{C}_{n}]^i)= \left\lceil\frac{n^2-n}{n^2-n-1}T_n(i) \right\rceil.$ 
\end{theorem}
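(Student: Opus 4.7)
The plan is to apply the equality case of Theorem \ref{teo15} with the outer digraph $D=\overrightarrow{C}_{n^2-n}$ and the inner digraph $H=[\overrightarrow{C}_n]^i$. Here $D$ has order $m=n^2-n$, and by Lemma \ref{lema17} its dichromatic number of the inner factor is $\text{dc}(H)=T_n(i)$, so the right-hand side of the desired formula is exactly what Theorem \ref{teo15} produces once the hypotheses are checked.

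The two things to verify are the value of $r$ (the maximum order of an acyclic vertex set in $D$) and the existence of the required circulant spanning subdigraph with the acyclicity condition on an interval of length $r$. For the first, note that any proper subset of $V(\overrightarrow{C}_{n^2-n})$ obtained by removing a single vertex destroys the unique directed cycle and induces a directed path, hence is acyclic; conversely no acyclic subset can contain all $n^2-n$ vertices. Thus $r=n^2-n-1$. For the second, $\overrightarrow{C}_{n^2-n}$ is itself isomorphic to the circulant digraph $\overrightarrow{C}_{n^2-n}(\{1\})$ on $\mathbb{Z}_{n^2-n}$, so one may take it as its own spanning subdigraph; the subdigraph induced by $r$ consecutive vertices is a directed path and thus acyclic, matching the hypothesis of the equality case.

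Putting these pieces together, the equality case of Theorem \ref{teo15} yields
\[
\text{dc}(\overrightarrow{C}_{n^2-n}[\overrightarrow{C}_n]^i)
= \left\lceil \frac{T_n(i)\cdot (n^2-n)}{n^2-n-1}\right\rceil
= \left\lceil \frac{n^2-n}{n^2-n-1}\,T_n(i)\right\rceil,
\]
which is the claimed formula. No further estimate is required because the theorem gives equality, not merely a lower bound.

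The main ``obstacle'' is really just bookkeeping: one must be careful that the composition is parsed correctly, namely that $\overrightarrow{C}_{n^2-n}[\overrightarrow{C}_n]^i$ is the composition of the single outer cycle $\overrightarrow{C}_{n^2-n}$ with the already-constructed digraph $[\overrightarrow{C}_n]^i$, so that Lemma \ref{lema17} supplies $\text{dc}$ of the inner factor and Theorem \ref{teo15} is applied only at the outermost level, rather than recursively peeling off copies of $\overrightarrow{C}_n$ and iterating the Josephus-type recurrence a second time.
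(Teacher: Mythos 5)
Your proposal is correct and matches the paper's (implicit) argument exactly: the paper derives Theorem \ref{teo18} precisely by feeding $\text{dc}([\overrightarrow{C}_n]^i)=T_n(i)$ from Lemma \ref{lema17} into the equality case of Theorem \ref{teo15} with outer digraph $\overrightarrow{C}_{n^2-n}$, $m=n^2-n$ and $r=n^2-n-1$. Your parenthetical care about parsing the iterated composition (i.e.\ invoking associativity of the lexicographic product to view the digraph as $\overrightarrow{C}_{n^2-n}$ composed with $[\overrightarrow{C}_n]^i$) is the only step the paper leaves tacit, and it is handled correctly.
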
 

\section{Final remarks}

Different factorizations can be considered, for instance, in \cite{MR0236063} answered the question about a factorization of $K_{n}$ into Hamiltonian directed paths, that is, if the elements of some group of order $n$ can be arranged in a sequence $c_1,c_2,\dots,c_n$ such that $c_1c_2c_3\dots c_i\not=c_1c_2c_3\dots c_j$ whenever $i\not= j$. This is shown to be possible for any Abelian group with exactly one element of order 2 and for the non-Abelian group of order 21.  Then, we have the following corollary.

\begin{corollary}\label{cor19}
If $K_n$ accepts a $\overrightarrow{P}_{n}$-factorization, then the digraph $D=\overrightarrow{C}_{n(n+1)}[\overrightarrow{P}_{n}]^i$ is $n(n+1)^i$-minimal with a balanced $n(n+1)^i$-coloring. Hence		
\[\text{dac}(D)=h(D)=n(n+1)^i,\]
for all $i\in\mathbb{Z}^+$.
\end{corollary}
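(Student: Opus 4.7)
The plan is to invoke Corollary \ref{cor13} with base digraph $D_{0}=\overrightarrow{C}_{n(n+1)}$ and $H=\overrightarrow{P}_{n}$. First I would verify that $D_{0}$ is $(n+1)$-minimal with a balanced $(n+1)$-coloring: by Remark \ref{identificacion} it suffices to exhibit an identification from $D_{0}$ onto $K_{n+1}$, and since $K_{n+1}$ is Eulerian it unfolds along an Eulerian circuit into $\overrightarrow{C}_{(n+1)n}=\overrightarrow{C}_{n(n+1)}$, so the reverse amalgamation does the job; the induced coloring splits the $n(n+1)$ vertices into $n+1$ chromatic classes of $n$ vertices each. In the notation of Corollary \ref{cor13} this gives $k=n+1$ and $q=n$ with $qk=n(n+1)$.

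Next I would match the factorization hypothesis. A $\overrightarrow{P}_{n}$-factorization of $K_{n}$ partitions the $n(n-1)$ arcs of $K_{n}$ into copies of $\overrightarrow{P}_{n}$ carrying $n-1$ arcs each, hence consists of exactly $n$ factors; this matches $q=n$, so after the obvious relabeling the hypothesis of Corollary \ref{cor13} is satisfied with $m=n$. Plugging $k=n+1$, $m=n$ and $q=n$ into that corollary yields that $D=\overrightarrow{C}_{n(n+1)}[\overrightarrow{P}_{n}]^{i}$ is $k^{i}m$-minimal, where $k^{i}m=(n+1)^{i}\cdot n=n(n+1)^{i}$, and that this minimal coloring is balanced.

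From here $\textrm{dac}(D)=n(n+1)^{i}$ by the definition of $k$-minimality. For the harmonious chromatic number, Theorem \ref{teo1} gives that $D$ has size $n(n+1)^{i}\bigl(n(n+1)^{i}-1\bigr)$, so equation \ref{eq1} forces $h(D)\geq n(n+1)^{i}$; conversely the complete $n(n+1)^{i}$-coloring witnessing minimality uses exactly one arc for each ordered pair of distinct colors (an arc count that saturates the bound), so it is also harmonious, giving $h(D)\leq n(n+1)^{i}$. There is essentially no conceptual obstacle beyond bookkeeping the exponents: Corollary \ref{cor13} does all the structural work and the remaining step reduces to counting arcs and invoking equation \ref{eq1}.
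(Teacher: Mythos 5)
Your proposal is correct and follows exactly the route the paper intends: the paper states Corollary \ref{cor19} as an immediate consequence of Corollary \ref{cor13} (with the $\overrightarrow{P}_n$-factorization of $K_n$ into $q=n$ factors and the Eulerian unfold of $K_{n+1}$ into $\overrightarrow{C}_{n(n+1)}$ supplying the hypotheses), and your derivation of $\text{dac}(D)=h(D)=n(n+1)^i$ from $k$-minimality via Theorem \ref{teo1} and Equation \ref{eq1} is the same bookkeeping used for Corollary \ref{cor6}. You have merely made explicit the parameter checks ($k=n+1$, $m=q=n$) that the paper leaves implicit.
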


In \cite{MR4221847} was given $H$-factorizations of the complete graphs via quadratic residues where $H$ is a circulant graph.  There are several types of factorizations of the complete graphs,  for instance, see \cite{MR1617664,MR3812023}.

Another possible interesting problem is due to Theorem \ref{teo18} and Corollary \ref{cor14}.  For any pair of positive integers $i,n$,  we have the dichromatic numbers of $\overrightarrow{C}_{n^2-n}[\overrightarrow{C}_{n}]^i$. Although these results provide an infinite number of pairs  of integers  $a\leq b$ such that there exists an oriented graph $D$ satisfying that $\text{dc}(D)=a$ and $\text{dac}(D)=b$. For a given $a\leq b$, there exists a oriented graph $D$ such that $\text{dc}(D)=a$ and $\text{dac}(D)=b$? It is known that the question for graphs is answered by Bhave, see \cite{MR532949}.

Finally, we remark that for graphs, an elementary identification is called as a harmonious homomorphism, see \cite{AMOR}. For digraphs, we need to extend proper coloring to digraphs. In this paper, we use the extension given by Hedge and Castelino \cite{MR2895432,MR3329642} and Edwards \cite{MR2998438}. The other possibility is via acyclic coloring. We define the \emph{harmonious dichromatic number} $\text{dh}(D)$ of $D$ as the smallest $k$ for which there exists an acyclic and harmonious coloring of $D$ using $k$ colors. Clearly, $\text{dh}(D)\leq h(D)$ but the lower bound of Equation \ref{eq1} of $h(D)$ is not necessarily a lower bound of $\text{dh}(D)$.

%---------------------- Bibliography ---------------------

\bibliographystyle{plain}
\bibliography{biblio}

\end{document}